\newtheorem{theorem}{Theorem}[section]
\newtheorem{lemma}[theorem]{Lemma}
\newtheorem{proposition}[theorem]{Proposition}
\newtheorem{corollary}[theorem]{Corollary}
\theoremstyle{definition}
\newtheorem{definition}[theorem]{Definition}
\newtheorem{example}[theorem]{Example}
\newtheorem{remark}[theorem]{Remark}
\begin{document}

\title[Generalizations of almost prime and right $S$-prime ideals in noncommutative rings]
{Generalizations of almost prime and right $S$-prime ideals in noncommutative rings }

\author[Alaa Abouhalaka, {\c S}ehmus F{\i}nd{\i}k, Nico Groenewald]
{Alaa Abouhalaka, {\c S}ehmus F{\i}nd{\i}k, Nico Groenewald}

\address{Department of Mathematics,
\c{C}ukurova University, 01330 Balcal\i,
 Adana, Turkey}
\email{alaa1aclids@gmail.com}
\email{sfindik@cu.edu.tr}

\address{Department of Mathematics, Nelson Mandela University, Gqeberha,
South Africa}
\email{nico.groenewald@mandela.ac.za}

\thanks
{}

\subjclass[2010]{16N60, 16W99.}
\keywords{Almost prime ideals; $S$-prime ideals, almost $S$-prime ideals, noncommutative rings.}

\begin{abstract}
Let $R$ be a noncommutative ring, and let $S$ be an  $m$-system of $R$. In this paper, we give more results on the concept of almost prime (right) ideals, that were introduced by the first two authors, especially in (right) $S$-unital rings, local rings, and decomposable rings. In addition, we introduce the concept of almost right $S$-prime ideals, and we show how some findings regarding almost prime ideals can be derived as consequences of almost right $S$-prime ideals. Besides, we show how  almost right $S$-prime ideals behave in related rings such as homomorphic images, quotient rings, and decomposable rings. Finally, we construct  almost right $S$-prime ideals using the Nagata method of idealization. 
 
\end{abstract}

\maketitle

\section{Introduction}

The pioneer study on almost prime ideals in noncommutative rings was done by the first two authors \cite{AbouFindik}, recently.
Let $R$ be a noncommutative ring, and $S$ be an $m$-system of $R$. 
Recall that a proper (right) ideal $P$ of $R$ is called an almost prime (right) ideal, if $AB\subseteq P$ and $AB\not\subseteq P^2$ implies $A\subseteq P$ or $B\subseteq P$ for all (right) ideals $A,B$ of $R$ (see \cite{AbouFindik}).  It gives several equivalent conditions for an  (a right) ideal to be almost prime. Besides, connections between almost prime right ideals and weakly prime right ideals were clarified in the work. Another result in \cite{AbouFindik} gives that the homomorphic image of a fully almost prime right ring (i.e., ring in which every (right) ideal  is an almost prime (right) ideal), is again a fully almost prime right ring. In addition, if $R$ is a fully almost prime right ring so is $R/I$, where $I$ is an ideal of $R$. 
Later, the notion of $S$-prime ideals (where $S$ is an $m$-system of $R$) was studied by the first author \cite{A}, as generalization of $S$-prime ideals (where $S$ is a multiplicatively closed subset of $R$) in commutative rings which were previously introduced in \cite{HK}.  Referring to \cite{A}, an ideal $P$ of  $R$, disjoint from  the $m$-system $S$ is called  a right $S$-prime ideal associated with an element $s\in S$ (briefly a right $S$-prime ideal), if  for all ideals $A$, $B$  of $R$ with $AB \subseteq P$, either $A\langle s\rangle \subseteq P$ or $B\langle s\rangle\subseteq P$.  Theorem 2.9 of \cite{A}, shows that an ideal $P$ is a right $S$-prime ideal if and only if
$(P:\langle s\rangle) = \{r \in R : r\langle s\rangle \subseteq P\}$
is a prime ideal of $R$ for some $s \in S$. In addition,  Theorem 3.11 of \cite {A}, is an $S$-version of Cohen's Theorem. 

Our results in the present article are twofold. We devote Section 2 to (right) almost prime ideals in (right) $S$-unital rings, local rings, and decomposable rings; examination of their fully almost prime (right) rings; and characterization of fully almost prime (right) rings which are not local rings. For instance, Theorem \ref{fully two sided} shows
that if $R$ is a fully almost prime ring which is right \text{\rm S}-unital, and if $M_1$ and $M_2$ are two distinct maximal ideals of $R$, then, $M_1M_2= M_2M_1=M_1\cap M_2$, and it is idempotent. Also, Theorem  \ref{KKKKKK}
states that if $R$ is a ring with unity in which the product of any two maximal right ideals is an almost prime right ideal, which is not an idempotent ideal, then $R$ is a local ring. 
Section 3 is devoted to constracting the notion of almost right $S$-prime ideal. In this context, an ideal $P$ of $R$ disjoint from $S$ is called an almost right $S$-prime ideal associated with an element $s\in S$ (or briefly an almost right $S$-prime ideal) if whenever $A$
and $B$ are ideals of $R$ with $AB\subseteq P$ and $AB\nsubseteq P^{2}$,
implies either $A\langle s\rangle \subseteq P$ or $B\langle s\rangle
\subseteq P$. We show how some findings of almost prime ideals turn as corollaries of almost right $S$-prime ideal. We also give several equivalent definitions for an  ideal to be classified as an almost right $S$-prime ideal. In addition, we study in related rings like homomorphic image, quotient rings, decomposable rings, and finally, we construct almost right $S$-prime ideals using the Method of Idealization, which was introduced by Nagata for commutative rings.

\section{Almost prime right ideals}

Throughout the paper, all rings are associative, noncommutative, and without unity unless stated otherwise; and by
an ideal we mean a proper two sided ideal.

\subsection{S-unital rings}
A ring $R$ is called right (left) S-unital if $a\in aR$ ($a\in Ra$) for all $a \in R$, and  $R$ is called S-unital if it is both right and left S-unital. 

Proposition 2.2 of \cite{AbouFindik} stated that an ideal $P$ of a ring $R$ with unity is an almost prime ideal if and only if $P$ is an almost prime right ideal.
In the next result, we show that this equivalence holds in left S-unital rings.  

\begin{proposition}\label{00}
Let $R$ be a left \text{\rm S}-unital ring and $P$ be an ideal of $R$. Then the following statements are equivalent.

$(1)$ $P$ is an almost prime right ideal.

$(2)$  $P$ is an almost prime  ideal.
\end{proposition}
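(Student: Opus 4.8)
The plan is to prove the two implications separately, with essentially all of the content living in $(2)\Rightarrow(1)$.

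The implication $(1)\Rightarrow(2)$ requires no work: every two-sided ideal is in particular a right ideal, so if $A,B$ are ideals of $R$ with $AB\subseteq P$ and $AB\not\subseteq P^2$, the almost prime right ideal hypothesis applies verbatim to the pair $(A,B)$ and yields $A\subseteq P$ or $B\subseteq P$. Hence $(1)$ immediately gives $(2)$.

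For $(2)\Rightarrow(1)$, the idea is to reduce an arbitrary pair of right ideals to the two-sided ideals they generate, so that hypothesis $(2)$ can be invoked. Let $A,B$ be right ideals with $AB\subseteq P$ and $AB\not\subseteq P^2$. First I would record that, since $R$ is left S-unital, every element $a$ of a right ideal $A$ satisfies $a\in Ra\subseteq RA$, so $A\subseteq RA$; combined with $AR\subseteq A$ (which gives both $R(RA)\subseteq RA$ and $(RA)R\subseteq RA$), this shows that $RA$ is precisely the two-sided ideal generated by $A$. Set $A'=RA$ and $B'=RB$; these are ideals of $R$ with $A\subseteq A'$ and $B\subseteq B'$.

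It then remains to verify the two hypotheses of $(2)$ for the pair $(A',B')$. For $A'B'\subseteq P$, I would rewrite $A'B'=(RA)(RB)=R\,(ARB)$ and use $AR\subseteq A$ to get $ARB\subseteq AB\subseteq P$, whence $A'B'\subseteq RP\subseteq P$ because $P$ is a (two-sided, hence left) ideal. For $A'B'\not\subseteq P^2$, note that $A\subseteq A'$ and $B\subseteq B'$ give $AB\subseteq A'B'$; if $A'B'\subseteq P^2$ held, then $AB\subseteq P^2$, contradicting the hypothesis. Thus $A'B'\subseteq P$ and $A'B'\not\subseteq P^2$, so $(2)$ yields $A'\subseteq P$ or $B'\subseteq P$, and since $A\subseteq A'$ and $B\subseteq B'$ this gives $A\subseteq P$ or $B\subseteq P$, as required. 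I expect the only load-bearing step to be the product manipulation $(RA)(RB)=R(ARB)\subseteq R(AB)$, where the right-ideal property of $A$ is used to absorb the inner copy of $R$; everything else is bookkeeping. The role of left S-unitality is isolated and clean: it is exactly what guarantees $A\subseteq RA$, so that $RA$ is the two-sided ideal generated by $A$ and the conclusion $RA\subseteq P$ pulls straight back to $A\subseteq P$ (one could instead work with $A+RA$, but the S-unital description is the tidiest).
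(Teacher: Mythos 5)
Your proposal is correct and follows essentially the same route as the paper's proof: both implications are handled identically, with $(2)\Rightarrow(1)$ passing from right ideals $A,B$ to the two-sided ideals $RA,RB$, using left S-unitality for $A\subseteq RA$ and the absorption $(RA)(RB)\subseteq R(AB)\subseteq P$. The only difference is cosmetic — you spell out why $RA$ is the two-sided ideal generated by $A$, which the paper leaves implicit.
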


\begin{proof}
$(1)\Rightarrow(2)$ Trivial by definition.

$(2)\Rightarrow(1)$ Suppose that  $AB\subseteq P$ and $AB\not\subseteq P^2$, for right ideals $A$ and $B$ of $R$.
Since $R$ is left S-unital ring, then $a\in Ra \subseteq RA$ for all $a\in A$. Thus $A \subseteq RA$. Similarly  $B \subseteq RB$. 
Now we have
\[
(RA)(RB) \subseteq RAB \subseteq RP\subseteq P
\]
for ideals $RA$ and $RB$. On the other hand, if $(RA)(RB) \subseteq P^2$, then
\[
AB\subseteq  (RA)(RB) \subseteq P^2,
\]
which is a contradiction. Hence $(RA)(RB) \not\subseteq P^2$, and by $(2)$ we have
either $A\subseteq RA\subseteq P$ or  $B\subseteq RB\subseteq P$.
\end{proof}

The next theorem is an analogue of Lemma 2.7 of \cite{B1} that was proved in integral domains.

\begin{theorem} Let $P$ be an almost prime ideal of a right \text{\rm S}-unital ring $R$, and let $a$ be an element in the center $C(R)$ of $R$ such that $a\not\in P$.
Then, if $a+P$ is a zero divisor in $R/P$, then $aP \subseteq P^2$.
\end{theorem}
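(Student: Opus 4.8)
The plan is to convert the zero-divisor hypothesis into a single witnessing element and then apply the almost-prime condition to two carefully chosen ideals. First I would unpack the assumption that $a+P$ is a zero divisor in $R/P$. Since $a\in C(R)$, the coset $a+P$ is central in $R/P$, so a left or a right zero-divisor relation amounts to the same thing: there is an element $b\in R\setminus P$ with $ab\in P$ (equivalently $ba\in P$).

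Next I would set up the two ideals. I would take $A=aR$ and first observe that this right ideal is in fact the two-sided ideal generated by $a$: centrality gives $Ra=aR$, whence $R\cdot(aR)=(Ra)R=aR^2\subseteq aR$, so $aR$ is two-sided; and right S-unitality gives $a\in aR$, so $a\in A$ and $A\not\subseteq P$. For the second ideal I would take $B=(b)+P$, where $(b)$ denotes the two-sided ideal generated by $b$; then $b\in B$, so $B\not\subseteq P$ as well.

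The technical core is the pair of inclusions $aP\subseteq AB\subseteq P$. The right-hand inclusion I would obtain by expanding $AB=(aR)(b)+(aR)P$: using that $a$ is central, every generator of $(aR)(b)$ can be rewritten in one of the forms $r(ab)r'$, $r(ab)$, $(ab)r'$, or $n(ab)$ (with $r,r'\in R$ and $n\in\mathbb{Z}$), each of which lies in $P$ because $ab\in P$ and $P$ is two-sided; and $(aR)P\subseteq aP\subseteq P$. The left-hand inclusion is immediate: since $a\in A$ and $P\subseteq B$, every product $ap$ with $p\in P$ lies in $AB$, so $aP\subseteq AB$.

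Finally I would invoke the almost-prime property of $P$ in its contrapositive form. We have $AB\subseteq P$ together with $A\not\subseteq P$ and $B\not\subseteq P$; hence $AB\subseteq P^2$, and therefore $aP\subseteq AB\subseteq P^2$, which is the claim. I expect the only real friction to be the bookkeeping in the step $AB\subseteq P$: one must treat the several types of generators of $(b)$ and repeatedly use the centrality of $a$ to commute it past arbitrary ring elements so as to expose the factor $ab$. Right S-unitality enters only through the single fact $a\in aR$, which is precisely what guarantees $a\in A$.
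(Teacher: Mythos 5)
Your proof is correct and follows essentially the same strategy as the paper's: both apply the almost-prime condition to an ideal containing $a$ and an ideal containing $b$ that has been enlarged by elements of $P$, so that the resulting product ideal lies in $P$, misses neither factor, and contains $aP$. The only difference is that you adjoin all of $P$ to the ideal generated by $b$ at once and invoke almost-primeness a single time, whereas the paper first handles $(a\rangle(b\rangle$ and then repeats the argument for each $x\in P$ with the ideal $x+(b\rangle$; your version is a mild streamlining of the same idea.
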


\begin{proof} By assumption, there exists $b\in R{\setminus}P$ such that
\[
ab+P=(a+P)(b+P)=P.
\]
Thus, $ab \in P$ and
\begin{align}
(((a\rangle +P)/P)(((b\rangle +P)/P)=&((a\rangle(b\rangle +(a\rangle P+P(b\rangle+P^2+P)/P\nonumber\\
=&((ab\rangle +(a\rangle P+P(b\rangle+P^2+P)/P\subseteq P/P.\nonumber
\end{align}
Hence, $(a\rangle(b\rangle \subseteq P$. Recall that  $(a\rangle\not\subseteq P$, $(b\rangle\not\subseteq P$, and $P$ is an almost prime ideal. Therefore, $(a\rangle(b\rangle \subseteq P^2$.

Now for all $x\in P$, we have $x+(b\rangle\not\subseteq P$. So, $(a\rangle(x+(b\rangle)=(a\rangle x + (a\rangle(b\rangle \subseteq P$.
Again, due of the fact that $P$ is an almost prime ideal, and $(a\rangle\not\subseteq P$, $x+(b\rangle\not\subseteq P$,
we obtain $(a\rangle x\subseteq(a\rangle x + (a\rangle(b\rangle\subseteq P^2$. This implies that $aP\subseteq (a\rangle P \subseteq P^2$.
\end{proof}

\subsection{Local Rings}

Recall that a ring $R$ with unity is said to be local ring if it contains a unique maximal right ideal $M$.      
We will denote it by $(R, M)$. Recall that $M$ is the unique (two sided) maximal ideal of $R$.

\begin{proposition}\label{local2}
Let $(R, M)$ be a local ring, and let $P$ be a right  ideal of $R$ such that $P^2=M^2$. Then $P$ is an almost prime right ideal. 
\end{proposition}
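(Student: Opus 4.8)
The plan is to verify the defining condition directly. Suppose $A$ and $B$ are right ideals of $R$ with $AB \subseteq P$ and $AB \not\subseteq P^2$; I must produce $A \subseteq P$ or $B \subseteq P$. The governing idea is to exploit the hypothesis $P^2 = M^2$ together with the defining feature of a local ring: since $R$ has unity, every proper right ideal is contained in a maximal right ideal, and $M$ is the only one, so every proper right ideal of $R$ lies inside $M$.

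First I would dispose of the case where both $A$ and $B$ are proper. In that situation $A \subseteq M$ and $B \subseteq M$, whence $AB \subseteq M^2 = P^2$. This directly contradicts the assumption $AB \not\subseteq P^2$, so this case cannot occur.

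Consequently at least one of $A, B$ must fail to be proper, i.e.\ must equal $R$. If $A = R$, then using $1 \in R$ I obtain $B = 1\cdot B \subseteq RB = AB \subseteq P$, so $B \subseteq P$. If instead $B = R$, then since $A$ is a right ideal we have $AR = A$, and hence $A = AR = AB \subseteq P$. Either way the conclusion $A \subseteq P$ or $B \subseteq P$ holds, establishing that $P$ is an almost prime right ideal.

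I do not expect a genuine obstacle here; the argument is short once the right dichotomy is set up. The only point requiring a little care is the reduction ``both proper $\Rightarrow AB \subseteq M^2$'', which rests on the standard fact that in a local ring every proper right ideal sits inside the unique maximal right ideal $M$; after that, the identity $P^2 = M^2$ converts the product bound into the forbidden inclusion $AB \subseteq P^2$, forcing one of the two ideals to be the whole ring and making the absorption step immediate.
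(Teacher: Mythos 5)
Your proof is correct and takes essentially the same approach as the paper's: the entire content is that any two proper right ideals lie in the unique maximal right ideal $M$, so $AB\subseteq M^2=P^2$, making the hypothesis $AB\not\subseteq P^2$ vacuous. The only difference is your explicit treatment of the case $A=R$ or $B=R$, which the paper bypasses because its standing convention is that all (right) ideals are proper.
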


\begin{proof}
Let $A$ and $B$ be right ideals of $R$. Then, $A\subseteq M$ and $B\subseteq M$. Thus, $AB\subseteq M^2 =P^2$, which yields that $P$ is an almost prime right ideal.
\end{proof}

The next two results are consequences of Proposition \ref{local2}.

\begin{proposition}\label{sup}
Let $(R, M)$ be a local ring, and let $P$ be a right  ideal of $R$ such that $M^2\subseteq P$. Then, $P$ is an almost prime right ideal if and only if $M^2 =P^2$. 
\end{proposition}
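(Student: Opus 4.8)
The plan is to prove the two implications separately, using Proposition \ref{local2} for one direction and a single application of the almost prime condition for the other. Before starting, I would record two inclusions that hold for free. Since an almost prime right ideal is by definition proper and $M$ is the unique maximal right ideal of the local ring $(R,M)$, every proper right ideal is contained in $M$; in particular $P\subseteq M$. Squaring this inclusion gives $P^2=PP\subseteq MM=M^2$, so the inclusion $P^2\subseteq M^2$ is always available. Together with the hypothesis $M^2\subseteq P$, these two facts frame the whole argument.

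The implication ``$M^2=P^2\Rightarrow P$ almost prime'' is immediate from Proposition \ref{local2}, which states precisely that $P^2=M^2$ forces $P$ to be an almost prime right ideal. (Note that $P$ is automatically proper here, since $R^2=R\neq M^2=P^2$.)

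For the converse, I would feed the pair $A=B=M$ into the almost prime condition. The hypothesis $M^2\subseteq P$ reads $MM\subseteq P$, so the condition applies and produces a dichotomy according to whether $MM\subseteq P^2$ or not. If $M^2=MM\subseteq P^2$, then combining with the free inclusion $P^2\subseteq M^2$ yields $P^2=M^2$, as wanted. If instead $MM=M^2\not\subseteq P^2$, then almost primeness of $P$ (applied to the pair $A=B=M$) forces $M\subseteq P$; since we already have $P\subseteq M$, this gives $P=M$ and hence $P^2=M^2$. Either branch lands on $P^2=M^2$.

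I do not expect a genuine obstacle: the content is entirely the observation that $M^2\subseteq P$ lets us test the almost prime condition on $(M,M)$, and the resulting dichotomy forces $P^2=M^2$ in both cases. The only points to state carefully are that $P\subseteq M$ (so that $P$ is proper and the maximality of $M$ applies) and that $P^2\subseteq M^2$ always holds, both being one-line consequences of the local-ring hypothesis.
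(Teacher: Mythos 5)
Your proposal is correct and follows essentially the same route as the paper: the "if" direction is delegated to Proposition \ref{local2}, and the "only if" direction applies the almost prime condition to the pair $(M,M)$, using $P\subseteq M$ to get $P^2\subseteq M^2$ and splitting into the cases $M^2\subseteq P^2$ and $M^2\not\subseteq P^2$ exactly as the paper does. Your extra remark that $M^2=P^2$ forces $P$ to be proper is a small point the paper leaves implicit, but the argument is the same.
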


\begin{proof}
Suppose that $P$ is an almost prime right ideal of $R$.
Then clearly, $P^2\subseteq M^2$ due to the fact that $P\subseteq M$.
If $M^2\subseteq P^2$, then we get that $M^2 =P^2$. Because $M^2\subseteq P$ and $P$ is almost prime,
the case $M^2\not\subseteq P^2$ implies that $M \subseteq P$. This gives that $M=P$, and hence $M^2 =P^2$.
The converse implification is a consequence of Proposition \ref{local2}, which completes the proof.
\end{proof}

The commutative analogue of the above proposition was proved in \cite{B}.

\begin{corollary}\label{c25}
Every local ring $(R, M)$ with $M^2 =0$ is a fully almost prime right ring. Futhermore, it is a fully weakly prime right ring. 
\end{corollary}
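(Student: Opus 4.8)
The plan is to derive both conclusions directly from Proposition~\ref{sup} by checking its hypothesis. Since $M^2=0$, we have in particular $M^2\subseteq P$ for \emph{every} right ideal $P$ of $R$, so every proper right ideal satisfies the standing assumption of Proposition~\ref{sup}. Moreover $P^2\subseteq M^2=0$ forces $P^2=0=M^2$, so the equivalent condition $M^2=P^2$ holds automatically. Thus Proposition~\ref{sup} immediately yields that each right ideal $P$ is an almost prime right ideal, which is exactly the statement that $(R,M)$ is a fully almost prime right ring.

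For the second assertion I would argue that the almost prime property collapses to the weakly prime property in this setting. The key observation is that $P^2=0$ for every right ideal $P$, so the defining hypothesis ``$AB\subseteq P$ and $AB\not\subseteq P^2$'' becomes ``$AB\subseteq P$ and $AB\neq 0$'', i.e.\ precisely the hypothesis ``$0\neq AB\subseteq P$'' appearing in the definition of a weakly prime right ideal. Hence the almost prime condition verified above is literally the weakly prime condition, and so every proper right ideal is weakly prime; that is, $R$ is a fully weakly prime right ring.

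The main (and only genuine) obstacle is a bookkeeping point about the square: I must confirm that $P^2=0$ for an arbitrary right ideal $P$, not merely for $M$ itself. This follows because $P\subseteq M$ gives $P^2\subseteq M^2=0$. Once this is in hand, both parts are immediate, so the proof will be short; the only care needed is to state clearly that the hypothesis $M^2\subseteq P$ of Proposition~\ref{sup} is satisfied vacuously-strongly here (since $M^2=0\subseteq P$ always) and that the resulting equality $M^2=P^2$ is forced.
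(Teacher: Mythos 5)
Your proposal is correct. For the first assertion you and the paper are doing essentially the same thing: the paper observes $P\subseteq M$ forces $P^2=M^2=0$ and invokes Proposition~\ref{local2} directly, while you route through Proposition~\ref{sup}; since the relevant direction of Proposition~\ref{sup} is itself just an application of Proposition~\ref{local2}, this is the same argument with one extra layer of indirection. The genuine difference is in the second assertion. The paper disposes of it by citing Theorem~2.7 of \cite{A}, an external result relating almost prime and weakly prime right ideals. You instead give a self-contained argument: when $P^2=0$, the clause ``$AB\subseteq P$ and $AB\not\subseteq P^2$'' is literally the clause ``$0\neq AB\subseteq P$'', so the almost prime and weakly prime conditions coincide for every right ideal, and the second claim follows from the first. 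This is sound (and in this setting both conditions are in fact vacuous, since $AB\subseteq M^2=0$ for all right ideals $A,B$). What your version buys is independence from the cited theorem; what the paper's version buys is brevity and consistency with its general framework, since the citation presumably covers exactly this coincidence in greater generality.
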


\begin{proof}
For any right ideal $P$ of $R$, we have that $P^2=M^2=0$. Thus, we get that $P$ is an almost prime right ideal by Proposition \ref {local2}.
Additionally, $P$ is a weakly prime right ideal by Theorem 2.7 of \cite{AbouFindik}.
\end{proof}

\begin{example}  Let $(R, M)$ be a local ring and $P$ be a right  ideal of $R$ such that $P\cap M^2\subseteq P^2$ ($P\cap M^2=0$).
Then, $P$ is an almost prime right ideal of $R$.
Observe that if $A$ and $B$ are right ideals of $R$ such that $AB \subseteq P$, then $AB \subseteq P\cap M^2\subseteq P^2$ ($AB =0\subseteq P^2)$.
\end{example}

A similar example to the above example was suggested by Victor Camillo (see \cite{B}) for the commutative case.
Now, we collect some propositions to be used in latter results.

\begin{proposition}
Let $R$ be a fully almost prime right ring such that $R=I\oplus J$ for some ideals $I,J$ of $R$. Then $I$ and $J$ are fully almost prime right rings.
\end{proposition}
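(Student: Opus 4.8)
The plan is to reduce everything to the ambient ring $R$ by showing that the right ideals of the summand $I$ are exactly the right ideals of $R$ that happen to lie inside $I$, after which the almost-prime property transfers verbatim. Since $I$ and $J$ are two-sided ideals with $I\cap J=0$, we have $IJ\subseteq I\cap J=0$ and $JI\subseteq I\cap J=0$; in particular $I$ and $J$ annihilate one another. By the symmetry of the hypothesis it suffices to prove the claim for $I$; moreover we may assume $I,J\neq 0$, since otherwise one summand equals $R$ and there is nothing to prove, and then $I\subsetneq R$.

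First I would establish the key correspondence: a subset $P\subseteq I$ is a right ideal of the ring $I$ if and only if it is a right ideal of $R$ contained in $I$. The nontrivial direction is that a right ideal $P$ of $I$ already absorbs multiplication by all of $R$: writing $R=I+J$ and using $PI\subseteq P$ together with $P\subseteq I$ (so that $PJ\subseteq IJ=0$), we obtain
\[
PR=PI+PJ\subseteq P+IJ=P+0=P .
\]
The converse is immediate, since $PI\subseteq PR\subseteq P$. The same computation shows that every right ideal $A$ of $I$ is a right ideal of $R$, and products of subsets of $I$ are computed identically in $I$ and in $R$; in particular $P^2$ denotes the same set in either ring.

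With this in hand the verification is direct. Let $P$ be a proper right ideal of $I$. Since $P\subseteq I\subsetneq R$, it is a proper right ideal of $R$, hence almost prime in $R$ because $R$ is a fully almost prime right ring. Now take right ideals $A,B$ of $I$ with $AB\subseteq P$ and $AB\not\subseteq P^2$. By the correspondence, $A$ and $B$ are right ideals of $R$, and these two relations are literally the same statements in $R$; almost primeness of $P$ in $R$ therefore yields $A\subseteq P$ or $B\subseteq P$. As $A,B,P$ all lie in $I$, these are exactly the required containments inside $I$, so $P$ is an almost prime right ideal of $I$. Hence $I$ is a fully almost prime right ring, and $J$ follows by symmetry.

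The hard part will be the correspondence lemma of the second paragraph: everything hinges on $IJ=0$, which is precisely what forces a right ideal of $I$ to swallow the $J$-component of $R$ automatically. Once that is secured, no genuinely new computation is needed, as the almost-prime condition for $P$ in $I$ is read off from the same condition for $P$ in $R$.
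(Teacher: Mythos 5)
Your proof is correct, but it takes a genuinely different route from the paper. The paper's proof is a one-line reduction: since $IJ=JI\subseteq I\cap J=0$, the projection $R\to I$ along $J$ is a ring epimorphism with kernel $J$, so $I\cong R/J$ (and $J\cong R/I$), and the conclusion follows from the paper's earlier result (Theorem 3.6 of \cite{A}) that quotients of fully almost prime right rings are again fully almost prime right rings. You instead avoid any appeal to that quotient theorem and argue directly inside $R$: the observation $IJ=JI=0$ gives the correspondence that the right ideals of the ring $I$ are exactly the right ideals of $R$ contained in $I$, products and powers of such ideals are computed identically in $I$ and in $R$, and the almost prime condition therefore transfers verbatim. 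Both arguments hinge on the same fact ($I$ and $J$ annihilate each other because $I\cap J=0$); the paper's version buys brevity and reuse of established machinery, while yours is self-contained and makes explicit the ideal correspondence that the isomorphism $I\cong R/J$ encodes implicitly. Your handling of the edge cases (properness of $P$ in $R$ when $J\neq0$, and the degenerate case where a summand vanishes) is sound.
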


\begin{proof}
Since $I\cong R/J$ and $J\cong R/I$, the proof follows from Theorem 3.6 of \cite{AbouFindik}.
\end{proof}

\begin{proposition}\label{0}
Let $I$ and $J$ be any ideals of a ring $R$ such that $I\not\subseteq J$ and $J\not\subseteq I$. If $I\cap J$ is an almost prime ideal then $IJ=JI=(I\cap J)^2$. 
\end{proposition}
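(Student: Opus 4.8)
Write $P = I \cap J$. The plan is to apply the almost prime hypothesis to the pair $(I,J)$, and symmetrically to $(J,I)$, using $P^2$ as the obstruction term. The first step is to record two sandwiching containments. Since $I$ and $J$ are two-sided ideals, the absorption laws give $IJ \subseteq I$ and $IJ \subseteq J$, hence $IJ \subseteq I \cap J = P$; the same reasoning yields $JI \subseteq P$. In the opposite direction, each factor in $P^2 = (I\cap J)(I\cap J)$ lies in both $I$ and $J$, so $P^2 \subseteq IJ$ and $P^2 \subseteq JI$. Note that none of this uses a unit, only the absorption properties of ideals.

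The second step is to convert the hypotheses $I \not\subseteq J$ and $J \not\subseteq I$ into $I \not\subseteq P$ and $J \not\subseteq P$. Indeed, if $I \subseteq P$ then $I \subseteq I \cap J \subseteq J$, contradicting $I \not\subseteq J$; the symmetric argument handles $J$. This is the only place the separation hypotheses are used, and it is precisely what prevents the almost prime conclusion from degenerating.

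Next I would invoke almost primeness. We have $IJ \subseteq P$. If $IJ \not\subseteq P^2$ were to hold, then almost primeness applied to the pair $(I,J)$ would force $I \subseteq P$ or $J \subseteq P$, both of which were excluded in the previous step. Hence $IJ \subseteq P^2$, and the identical argument with $(J,I)$ gives $JI \subseteq P^2$. Combining these with the reverse containments $P^2 \subseteq IJ$ and $P^2 \subseteq JI$ from the first step yields $IJ = P^2 = JI$, which is exactly the assertion.

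I do not anticipate a genuine obstacle: the statement is essentially a direct unwinding of the definition once the two containments of the first step and the observation that $I,J \not\subseteq P$ are in hand. The only point meriting a moment's care is confirming that $IJ \subseteq I \cap J$ and $P^2 \subseteq IJ \cap JI$ hold without assuming $R$ has a unit, which is immediate from the ideal absorption laws.
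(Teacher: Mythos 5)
Your proof is correct and follows essentially the same route as the paper: establish $IJ,\ JI \subseteq I\cap J$ and $(I\cap J)^2 \subseteq IJ,\ JI$, then apply almost primeness of $I\cap J$ to the pair $(I,J)$ (and symmetrically to $(J,I)$) and rule out the conclusions $I\subseteq I\cap J$ and $J\subseteq I\cap J$ using the separation hypotheses. Your write-up is slightly more explicit than the paper's about the unitless setting and the reverse containment $P^2\subseteq JI$, but the argument is the same.
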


\begin{proof}
Clearly $(I\cap J)^2\subseteq IJ$ and $IJ\subseteq I\cap J$. Assume that $IJ\not\subseteq (I\cap J)^2$.
Then, either $I\subseteq I\cap J\subseteq J$ or $J\subseteq I\cap J\subseteq I$, since $I\cap J$ is an almost prime ideal.
This contradicts with $I\not\subseteq J$ and $J\not\subseteq I$. Hence $IJ\subseteq (I\cap J)^2$. Similarly one can obtain that $JI=(I\cap J)^2$.
\end{proof}

The next corollary results from Proposition \ref{0}.

\begin{corollary}
Let $R$ be a fully almost prime ring. If $IJ\not\subseteq(I\cap J)^2$ for all ideals $I$ and $J$ of $R$, then the set of ideals of $R$ is totally ordered under inclusion.
\end{corollary}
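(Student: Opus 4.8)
The plan is to argue by contradiction, turning the hypothesis $IJ\not\subseteq(I\cap J)^2$ into an obstruction to incomparability via Proposition~\ref{0}. Suppose, for contradiction, that the set of ideals of $R$ is \emph{not} totally ordered under inclusion. Then there exist ideals $I$ and $J$ of $R$ with $I\not\subseteq J$ and $J\not\subseteq I$; these are exactly the incomparability hypotheses needed to invoke Proposition~\ref{0}.

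First I would check that $I\cap J$ is a proper ideal, so that the fully almost prime assumption applies to it. Since $I\not\subseteq J$ forces $I\neq R$, and $I\cap J\subseteq I$, the intersection $I\cap J$ is a proper two sided ideal of $R$. Because $R$ is a fully almost prime ring, every proper ideal is almost prime, and in particular $I\cap J$ is an almost prime ideal.

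Next I would apply Proposition~\ref{0} to the pair $I,J$: the hypotheses $I\not\subseteq J$, $J\not\subseteq I$, together with $I\cap J$ being almost prime, yield $IJ=JI=(I\cap J)^2$. In particular $IJ\subseteq(I\cap J)^2$, which directly contradicts the standing assumption that $IJ\not\subseteq(I\cap J)^2$ for all ideals $I,J$ of $R$. This contradiction shows that no incomparable pair can exist, so the set of ideals is totally ordered.

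I do not expect a serious obstacle here, since the corollary is essentially the contrapositive packaging of Proposition~\ref{0}. The only point requiring care is the verification that $I\cap J$ is proper, so that the fully almost prime hypothesis legitimately supplies the almost primeness demanded by Proposition~\ref{0}; once that is in place, the chain of implications closes immediately.
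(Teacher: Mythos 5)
Your proof is correct and takes exactly the route the paper intends: the paper gives no separate argument, stating only that the corollary ``results from Proposition \ref{0}'', which is precisely the contrapositive reading you spelled out (an incomparable pair $I,J$ would make $I\cap J$ almost prime and force $IJ=(I\cap J)^2$, contradicting the hypothesis). One small slip in your properness check: it is $J\not\subseteq I$, not $I\not\subseteq J$, that forces $I\neq R$; in any case the verification is redundant, since the paper's standing convention is that ``ideal'' already means \emph{proper} two sided ideal, so $I\cap J\subseteq I$ is proper automatically.
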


The following two theorems describe fully almost prime (right) rings which are not local.

\begin{theorem}\label{fully two sided}
Let $R$ be a fully almost prime ring which is right \text{\rm S}-unital.
If $M_1$ and $M_2$ are two distinct maximal ideals of $R$ \text{\rm (}thus R is not a local ring\text{\rm )}, then followings hold.

$(1)$ $M_1M_2= M_2M_1$.

$(2)$ $M_1\cap M_2=M_1M_2$, and it is idempotent.

\end{theorem}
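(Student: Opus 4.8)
The plan is to use the two maximal ideals $M_1, M_2$ together with the fact that $R$ is fully almost prime to force the relevant products into $(M_1 \cap M_2)^2$. Since $M_1$ and $M_2$ are distinct maximal ideals, neither can contain the other, so $M_1 \not\subseteq M_2$ and $M_2 \not\subseteq M_1$. The maximal ideal $M_1 \cap M_2$ is itself an almost prime ideal (it is proper since both $M_i$ are proper, and every ideal of $R$ is almost prime by hypothesis). This is exactly the setup of Proposition \ref{0}, which immediately yields $M_1 M_2 = M_2 M_1 = (M_1 \cap M_2)^2$, establishing $(1)$ and the equality $M_1 M_2 = (M_1 \cap M_2)^2$ half of the identities in $(2)$.

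For the remaining content of $(2)$ I would show the reverse containment $M_1 \cap M_2 \subseteq M_1 M_2$, which combined with $M_1 M_2 = (M_1 \cap M_2)^2 \subseteq M_1 \cap M_2$ pins down $M_1 \cap M_2 = M_1 M_2 = (M_1 \cap M_2)^2$ and simultaneously shows idempotence. The key tool here is that distinct maximal ideals in an S-unital ring are comaximal, i.e. $M_1 + M_2 = R$. First I would justify comaximality: the ideal $M_1 + M_2$ strictly contains each $M_i$ by maximality of the $M_i$ and distinctness, so it equals $R$. Then using right S-unitality, for any $x \in M_1 \cap M_2$ we have $x \in xR$, and writing $R = M_1 + M_2$ we can decompose the multiplier to express $x$ as an element of $(M_1 \cap M_2)M_1 + (M_1 \cap M_2)M_2$, which lands inside $M_2 M_1 + M_1 M_2 = M_1 M_2$ (using part $(1)$). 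This gives $M_1 \cap M_2 \subseteq M_1 M_2$.

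The step I expect to be the main obstacle is the comaximality argument and its S-unital implementation. In the commutative-with-unity setting, $M_1 + M_2 = R$ together with $1 = m_1 + m_2$ gives the standard decomposition $M_1 \cap M_2 = M_1 M_2$ cleanly; here there is no unity, so I must replace the identity $1$ by the S-unital property $x \in xR$ and carefully track that the resulting products land in the correct one-sided ideals. Concretely, from $x \in xR = x(M_1 + M_2) = xM_1 + xM_2$, and noting $x \in M_1 \cap M_2$ so that $xM_1 \subseteq (M_1 \cap M_2)M_1 \subseteq M_2 M_1$ and $xM_2 \subseteq (M_1 \cap M_2)M_2 \subseteq M_1 M_2$, I obtain $x \in M_2 M_1 + M_1 M_2 = M_1 M_2$ by part $(1)$. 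Once this containment is secured, idempotence is automatic: $(M_1 \cap M_2)^2 = M_1 M_2 = M_1 \cap M_2$. The only delicate point to verify is that $M_1 \cap M_2$ is genuinely a proper ideal so that Proposition \ref{0} applies, but this is clear since it is contained in the proper ideal $M_1$.
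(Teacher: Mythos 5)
Your proposal is correct and follows essentially the same route as the paper: Proposition \ref{0} applied to the almost prime ideal $M_1\cap M_2$ gives $M_1M_2=M_2M_1=(M_1\cap M_2)^2$, and the reverse inclusion $M_1\cap M_2\subseteq M_1M_2$ is obtained exactly as in the paper via comaximality and $x\in xR=x(M_1+M_2)$. The only blemish is the phrase ``the maximal ideal $M_1\cap M_2$'' (it is not maximal, merely proper), but your parenthetical justification shows you meant the right thing.
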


\begin{proof} $(1)$ Since $M_1\not\subseteq M_2,  M_2\not\subseteq M_1$, and $M_1 \cap M_2$ is an almost prime ideal of $R$, then by Proposition \ref{0}
we get that $M_1M_2= (M_1\cap M_2)^2 =M_2M_1$.

$(2)$ We provide the proof of the fact that $M_1\cap M_2=M_1M_2$, which is well known consequence of $(1)$.
Clearly, $M_1M_2 \subseteq M_1 \cap M_2$. 
Because $M_1$ and $M_2$ are distinct maximal ideals, they are comaximal ideals. That is, $M_1+ M_2=R$.
Now let $r\in M_1\cap M_2$. Then $r\in M_1$, $r\in M_2$, and $r\in rR=r(M_1+M_2)$ because $R$ is right S-unital.
Thus, by $(1)$,
\[
r=rm_1+rm_2\in M_2M_1+M_1M_2=M_1M_2
\]
for some $m_1\in M_1$, $m_2\in M_2$.
Therefore $M_1\cap M_2=M_1M_2$, and by Proposition \ref{0}, we get that 
\[
M_1\cap M_2=M_1M_2=(M_1\cap M_2)^2.
\]
\end{proof}

\begin{corollary}
We conclude from Theorem \ref{fully two sided} by induction that if $R$ is a fully almost prime ring which is right \text{\rm S}-unital
with dictinct maximal ideals $M_1,\ldots,M_n$, then,
\[
M_1\cdots M_n=M_{\pi(1)}\cdots M_{\pi(n)},
\]
for every permutation $\pi\in S_n$, $n\geq2$.

\end{corollary}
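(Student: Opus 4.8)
The plan is to reduce the statement to the two-factor case already established in Theorem~\ref{fully two sided}(1) and then propagate it through all of $S_n$ using the group-theoretic fact that the symmetric group is generated by adjacent transpositions. The base case $n=2$ is precisely $M_1M_2=M_2M_1$ from part~(1) of that theorem. For the inductive (really, generator-by-generator) step, the key observation I would isolate is that it suffices to prove the identity when $\pi$ is a single adjacent transposition $(i,\,i+1)$, since every permutation in $S_n$ is a composition of such transpositions and the claimed equalities compose.

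Concretely, for the adjacent swap at positions $i$ and $i+1$ I would use associativity of ideal multiplication to bracket off the relevant pair and then apply Theorem~\ref{fully two sided}(1):
\[
M_1\cdots M_{i-1}(M_iM_{i+1})M_{i+2}\cdots M_n
= M_1\cdots M_{i-1}(M_{i+1}M_i)M_{i+2}\cdots M_n .
\]
Here the crucial point is that $M_i$ and $M_{i+1}$ are two \emph{distinct} maximal ideals of $R$, so the hypothesis of Theorem~\ref{fully two sided}(1) is met and the interchange $M_iM_{i+1}=M_{i+1}M_i$ is legitimate. Iterating over a decomposition of an arbitrary $\pi$ into adjacent transpositions then yields $M_{\pi(1)}\cdots M_{\pi(n)}=M_1\cdots M_n$ for every $\pi\in S_n$.

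The argument is essentially bookkeeping, so there is no serious obstacle; the only point requiring a moment's care is to confirm that Theorem~\ref{fully two sided}(1) applies at \emph{every} intermediate swap. This is automatic: since the ideals $M_1,\ldots,M_n$ are pairwise distinct, each adjacent pair encountered while sorting a permuted word back to the identity is again a pair of distinct maximal ideals, so part~(1) is available at each step. An equivalent formulation, if one prefers a cleaner induction on $n$, is to locate the factor $M_n$ inside $M_{\pi(1)}\cdots M_{\pi(n)}$, commute it to the last position by repeated adjacent interchanges, and then invoke the inductive hypothesis on the remaining $n-1$ factors; both routes rest on the same two ingredients, namely associativity and the pairwise commutativity supplied by part~(1).
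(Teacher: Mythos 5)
Your proposal is correct and is exactly the argument the paper intends: the corollary is asserted to follow ``by induction'' from Theorem~\ref{fully two sided}(1) with no further details given, and your reduction to adjacent transpositions together with associativity of ideal multiplication is precisely the bookkeeping that fills this in. The one point requiring care --- that every intermediate swap involves two \emph{distinct} maximal ideals, so that Theorem~\ref{fully two sided}(1) applies at each step --- you identify and resolve correctly.
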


\begin{theorem}\label{local3}
Let the ring $R$ with unity be a fully almost prime right ring which is not local.
Then, for any two maximal right ideals $M_1$ and $M_2$, either $M_1M_2$ or $M_2M_1$ is idempotent.
\end{theorem}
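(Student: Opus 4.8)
The plan is to work with the two products $P:=M_1M_2$ and $Q:=M_2M_1$ and to exploit the almost prime property of each, noting that both are proper right ideals of the fully almost prime right ring $R$. First I would record the basic containments. Since $M_2$ is a right ideal, every element of $M_1M_2$ lies in $M_1R\subseteq M_1$, so $P=M_1M_2\subseteq M_1$; symmetrically $Q=M_2M_1\subseteq M_2$. In particular $P$ and $Q$ are proper (being contained in the maximal right ideals $M_1$, $M_2$), hence almost prime by hypothesis. I would also observe that distinct maximal right ideals are incomparable, so $M_1\not\subseteq M_2$ and $M_2\not\subseteq M_1$; combined with $P\subseteq M_1$ this gives $M_2\not\subseteq P$, and combined with $Q\subseteq M_2$ it gives $M_1\not\subseteq Q$.

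The heart of the argument is a single use of the contrapositive of the almost prime condition: if $AB\subseteq P$ while $A\not\subseteq P$ and $B\not\subseteq P$, then $AB\subseteq P^2$. I would apply this with $A=M_1$, $B=M_2$, and $P=M_1M_2$, so that $AB=P\subseteq P$ holds trivially. We already know $M_2\not\subseteq P$. If in addition $M_1\not\subseteq P$, then almost primeness of $P$ forces $M_1M_2\subseteq(M_1M_2)^2$; since the reverse inclusion $(M_1M_2)^2\subseteq M_1M_2$ is automatic, $P=M_1M_2$ is idempotent and we are done.

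This leaves the case $M_1\subseteq P$, which (as $P\subseteq M_1$) means precisely $M_1=M_1M_2$. Here I would pass to $Q=M_2M_1$ and run the symmetric argument with $A=M_2$, $B=M_1$: since $M_1\not\subseteq Q$, either $M_2\not\subseteq Q$, in which case almost primeness of $Q$ yields $M_2M_1\subseteq(M_2M_1)^2$ and $Q$ is idempotent, or else $M_2=M_2M_1$. In this last, degenerate situation both products collapse onto the factors, and a short computation closes the proof: using $M_1=M_1M_2$ and then $M_2M_1=M_2$,
\[
M_1^2=(M_1M_2)M_1=M_1(M_2M_1)=M_1M_2=M_1,
\]
so $M_1$ is idempotent, whence $(M_1M_2)^2=M_1^2=M_1=M_1M_2$ and $P$ is idempotent.

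I expect the only real subtlety to be the bookkeeping in this final case: one must verify that the simultaneous equalities $M_1=M_1M_2$ and $M_2=M_2M_1$ force $M_1$, and hence $M_1M_2$, to be idempotent rather than producing a genuinely new configuration. Everything else is a direct invocation of the almost prime hypothesis, so I anticipate no serious difficulty; note that the ``with unity'' assumption is used only lightly, to guarantee the existence and incomparability of the maximal right ideals, while the incomparability $M_2\not\subseteq P$ and $M_1\not\subseteq Q$ is exactly what makes the contrapositive of almost primeness bite.
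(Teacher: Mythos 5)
Your proposal is correct and takes essentially the same route as the paper: both apply the almost prime property to the proper right ideals $M_1M_2$ and $M_2M_1$, use incomparability of distinct maximal right ideals to rule out $M_2\subseteq M_1M_2$ (resp.\ $M_1\subseteq M_2M_1$), and settle the remaining case $M_1=M_1M_2$, $M_2=M_2M_1$ with the same associativity computation $M_1^2=(M_1M_2)M_1=M_1(M_2M_1)=M_1M_2=M_1$. The only difference is presentational -- you argue directly by cases where the paper assumes neither product is idempotent and derives a contradiction.
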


\begin{proof} Because $R$ is not a local ring, there exist two distinct maximal right ideals $M_1, M_2$ of $R$.
Assume that neither $M_1M_2$ nor $M_2M_1$ is idempotent. Then, $M_1M_2 \not\subseteq(M_1M_2)^2$
gives either $M_1\subseteq M_1M_2$ or $M_2\subseteq M_1M_2$ in the fully almost prime right ring $R$. 
Let $M_2\subseteq M_1M_2$. Then, $M_2\subseteq M_1M_2\subseteq M_1$ which follows that $M_1=M_2$. Contradiction. Thus, $M_1\subseteq M_1M_2\subseteq M_1$.
Therefore, $M_1 =  M_1M_2$. Similarly one can obtain that $M_2=M_2M_1$.
Hence,
\begin{align}
&(M_1)^2=(M_1M_2)M_1=M_1(M_2M_1)=M_1M_2=M_1,\nonumber\\
&(M_2)^2=(M_2M_1)M_2=M_2(M_1M_2)=M_2M_1=M_2.\nonumber
\end{align}
So, $M_1$ and $M_2$ are idempotent right ideals of $R$. As a consequence,
\begin{align}
&(M_1M_2)^2=(M_1M_2)(M_1M_2)=M_1M_1=M_1=M_1M_2,\nonumber\\
&(M_2M_1)^2=(M_2M_1)(M_2M_1)=M_2M_2=M_2=M_2M_2.\nonumber
\end{align}
Hence, both $M_1M_2$ and $M_2M_1$ are idempotent, which contradicts with our assumption.
\end{proof}

The ring $R$ in next example was suggested in Example 2.1 of \cite{AbouFindik}, which provides an illustration for Theorem \ref{local3}.

\begin{example}
All the maximal right ideals of the fully almost prime right ring $R$ are $I$, $J$, $P$.
Clearly $IJ=0$, $JI=J$; $IP=I$, $PI=P$; and $JP=J$, $PJ=0$. Moreover, $0$, $I$ and $P$ are idempotent, but $J$ is not idempotent. 
\end{example}

\begin{theorem}\label{KKKKKK}
Let $R$ be a ring with unity in which the product of any two maximal right ideals is an almost prime right ideal, but not an idempotent ideal.
Then, $R$ is a local ring. 
\end{theorem}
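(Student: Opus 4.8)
The plan is to argue by contradiction and mirror the computation in Theorem~\ref{local3}. If $R$ were not local, it would possess two distinct maximal right ideals $M_1$ and $M_2$, and the hypothesis then applies to both ordered products $M_1M_2$ and $M_2M_1$, each of which is an almost prime right ideal but not idempotent.

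First I would record the routine inclusions. Since $M_1M_2$ and $M_2M_1$ are right ideals, they satisfy $(M_1M_2)^2\subseteq M_1M_2$ and $(M_2M_1)^2\subseteq M_2M_1$; combined with the assumption that neither product is idempotent, this forces $M_1M_2\not\subseteq(M_1M_2)^2$ and $M_2M_1\not\subseteq(M_2M_1)^2$. Now I would apply the almost prime hypothesis to the pair $(M_1,M_2)$ with $P=M_1M_2$: from $M_1M_2\subseteq M_1M_2$ and $M_1M_2\not\subseteq(M_1M_2)^2$ we obtain $M_1\subseteq M_1M_2$ or $M_2\subseteq M_1M_2$. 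The alternative $M_2\subseteq M_1M_2\subseteq M_1$ would give $M_2\subseteq M_1$, whence $M_1=M_2$ by maximality, a contradiction; so $M_1=M_1M_2$. The same argument applied to $(M_2,M_1)$ with $P=M_2M_1$ yields $M_2=M_2M_1$.

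It then remains to extract the contradiction exactly as in Theorem~\ref{local3}. From $M_1=M_1M_2$ and $M_2=M_2M_1$ one computes $M_1^2=(M_1M_2)M_1=M_1(M_2M_1)=M_1M_2=M_1$, so $M_1$ is idempotent; consequently $(M_1M_2)^2=M_1^2=M_1=M_1M_2$, so $M_1M_2$ is idempotent, contradicting the hypothesis. Hence $R$ must be local.

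I do not anticipate a serious obstacle, since the statement is essentially the contrapositive packaging of Theorem~\ref{local3}. The only subtlety is to confirm that the weaker hypothesis here — that merely the \emph{products} of maximal right ideals are almost prime, rather than full almost primeness of $R$ — still suffices; it does, because the proof never invokes the almost prime property of anything other than the two products $M_1M_2$ and $M_2M_1$ themselves. The one point worth double-checking is the step collapsing $M_2\subseteq M_1$ to $M_1=M_2$, which relies on $M_1$ being a proper right ideal, guaranteed because $R$ has unity and $M_1$ is maximal.
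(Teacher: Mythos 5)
Your proposal is correct and follows essentially the same route as the paper: assume two distinct maximal right ideals exist, use the almost prime hypothesis on the non-idempotent products to deduce $M_1=M_1M_2$ and $M_2=M_2M_1$ (exactly the argument of Theorem~\ref{local3}), and then derive that $M_1M_2$ is idempotent after all. The paper reaches the final contradiction by the direct substitution $M_1M_2=M_1(M_2(M_1M_2))=(M_1M_2)^2$ rather than by first showing $M_1$ is idempotent, but this is only a cosmetic difference.
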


\begin{proof}
Assume that the ring $R$ is not local. Then, there exist two distinct maximal right ideals $M_1, M_2$ of $R$.
Now by assumption, both $M_1M_2$ and $M_2M_1$ are almost prime right ideals, that are not idempotent. 
Then, similar to the proof of Theorem \ref{local3}, we have that $M_1=M_1M_2$ and $M_2=M_2M_1$.
Thus, 
\[
M_1M_2=M_1(M_2M_1)=M_1(M_2(M_1M_2))=(M_1M_2)^2,
\]
that is a contradiction.
\end{proof}

The following theorem is well known, which is necessary in the proof of Theorem \ref{local1}.

\begin{theorem}\label{local} Let $R$ be a ring with unity and $U_r\subseteq R$ be the set of elements with right inverse. Then, the following statements are equivalent.

$(1)$ $R$ is a local ring.

$(2)$ $R{\setminus}U_r$ is contained in a right ideal of $R$.

$(3)$ $R{\setminus}U_r$ is a right ideal of $R$. 

$(4)$ If $R\neq0$, then $x$ or $1-x$ is a unit for every $x\in R$. 

\end{theorem}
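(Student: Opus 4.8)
The plan is to prove the four statements equivalent by running the single cycle $(1)\Rightarrow(4)\Rightarrow(3)\Rightarrow(2)\Rightarrow(1)$. The organizing remark, used throughout, is that since $R$ has a unity, an element $x$ lies in $U_r$ if and only if $xR=R$; hence $x\notin U_r$ precisely when $xR$ is a proper right ideal. Two consequences follow immediately: every proper right ideal is disjoint from $U_r$, so that $R\setminus U_r$ is exactly the union of all proper right ideals of $R$; and when $R$ is local with maximal right ideal $M$, every proper right ideal is contained in $M$ while no element of $M$ can be right invertible, whence $R\setminus U_r=M$.

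With this in hand the arc $(3)\Rightarrow(2)\Rightarrow(1)$ is formal. For $(3)\Rightarrow(2)$, the set $R\setminus U_r$ is a proper right ideal (it omits $1$, since $1\in U_r$), so it is contained in a proper right ideal, namely itself. For $(2)\Rightarrow(1)$, if $R\setminus U_r\subseteq I$ for a proper right ideal $I$, then by the organizing remark every proper right ideal of $R$ lies in $R\setminus U_r\subseteq I$; thus $I$ is the largest proper right ideal and hence the unique maximal right ideal, i.e.\ $R$ is local.

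The substance lies in the passage through $(4)$, whose key preliminary is that, under either $(1)$ or $(4)$, a one-sided inverse is automatically two-sided, so that $U_r$ coincides with the set $U$ of units. Indeed, if $ab=1$ then $ba$ is idempotent, and $R$ has only the trivial idempotents $0$ and $1$: under $(4)$ because a unit idempotent must equal $1$ and one of $e,1-e$ is a unit; under $(1)$ because an idempotent $e\notin M$ is right invertible, forcing $e=1$, while $e\in M$ forces $1-e\notin M$ and hence $e=0$. Once idempotents are trivial, $ba=0$ would give $b=(ba)b=0$, contradicting $ab=1$, so $ba=1$. Granting $U_r=U$, the implication $(1)\Rightarrow(4)$ is immediate: for $x\in R$, either $x\notin M$ and $x$ is right invertible hence a unit, or $x\in M$ and then $1-x\notin M$ is a unit.

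Finally, for $(4)\Rightarrow(3)$ I would prove that the set $N$ of non-units is a proper two-sided ideal equal to $R\setminus U_r$. Closure under multiplication by $R$ on either side is easy: if $ra$ (resp.\ $ar$) were a unit, then $a$ would be one-sided invertible, hence a unit by the preliminary, so $N$ absorbs products. The decisive point, where hypothesis $(4)$ is genuinely used, is closure under addition: if $a,b\in N$ but $a+b=u$ were a unit, then $u^{-1}a$ and $u^{-1}b=1-u^{-1}a$ would both be non-units, contradicting $(4)$. Hence $N=R\setminus U_r$ is a proper right ideal and $(3)$ holds. I expect the main obstacle to be exactly this combination, namely the idempotent lemma reconciling right invertibility with invertibility together with the additive-closure argument, since everything else becomes routine once $U_r=U$ is established.
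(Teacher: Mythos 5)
The paper offers no proof for you to be compared against: Theorem~\ref{local} is quoted as ``well known'' (it is in essence Theorem~19.1 of \cite{L}) and is used only as an ingredient in the proof of Theorem~\ref{local1}. Your argument is a correct, self-contained proof, so it supplies something the paper omits. The single cycle $(1)\Rightarrow(4)\Rightarrow(3)\Rightarrow(2)\Rightarrow(1)$ does establish all four equivalences; the organizing remark that $x\in U_r$ if and only if $xR=R$, hence that $R\setminus U_r$ is exactly the union of all proper right ideals, correctly reduces $(3)\Rightarrow(2)\Rightarrow(1)$ to bookkeeping; and the two substantive steps are sound. The idempotent lemma ($ab=1$ forces $ba$ idempotent; under either $(1)$ or $(4)$ the only idempotents are $0$ and $1$, and $ba=0$ would force $1=ab=0$) legitimately gives $U_r=U$, after which $(1)\Rightarrow(4)$ is immediate; and in $(4)\Rightarrow(3)$ the absorption property uses two-sidedness of inverses exactly as you invoke it (if $ra$ is a unit then $a$ is left invertible, hence a unit), while the additive-closure argument via $u^{-1}a$ and $1-u^{-1}a=u^{-1}b$ is the standard and correct place where $(4)$ genuinely enters.

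Two small reading conventions are needed, both defects of the theorem's statement rather than of your proof, and you implicitly adopt the right ones. First, the right ideal in $(2)$ (and in $(3)$) must be understood as proper, in line with the paper's standing convention that ideals are proper; otherwise $(2)$ is vacuous and $(2)\Rightarrow(1)$ false. You use properness precisely when you conclude that $I$ is the unique maximal right ideal. Second, the zero ring satisfies $(4)$ as literally written (the implication is vacuous) but none of $(1)$--$(3)$, since then $R\setminus U_r=\emptyset$; your step $(4)\Rightarrow(3)$ tacitly assumes $R\neq0$ (for instance, to have $0\in N$, and to derive the contradiction $1=0$ from $ba=0$). Reading $(4)$ as ``$R\neq0$, and for every $x$ either $x$ or $1-x$ is a unit'' repairs this, and is clearly what is intended.
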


The commutative analogue of the following theorem was proved in Lemma 2.26 of \cite{B}.
Here, we set a proof for duo rings.

\begin{theorem}\label{local1}
Let $R$ be a local ring with the maximal ideal $M$, such that every principal ideal of $R$ is almost prime. If $aR=Ra$ for all $a\in R$, then $M^2=0$.
\end{theorem}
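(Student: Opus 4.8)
\noindent\emph{Proof proposal.} The first move is to exploit the hypothesis $aR=Ra$: for every $x\in R$ the principal right ideal $xR$ coincides with $Rx$ and is in fact a two-sided ideal, so I may write $(x)=xR=Rx$ and compute freely with products of principal ideals. Since $R$ has a unit and is local, I also record the two facts I will lean on: every element of the form $1-m$ with $m\in M$ is a (two-sided) unit, and $M$ is the Jacobson radical of $R$. The goal $M^2=0$ reduces to showing $ab=0$ for every pair $a,b\in M$, so I fix such a pair and put $A=(a)$, $B=(b)$, and $P=(ab)$. A short computation in the duo setting gives the identities $AB=(a)(b)=(ab)=P$ and $P^2=(ab)^2=(abab)$.

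By hypothesis $P$ is almost prime, and $AB=P\subseteq P$, so at least one of the following holds: $AB\subseteq P^2$, or $A\subseteq P$, or $B\subseteq P$. The plan is to treat these three cases separately and show each forces $ab=0$. In the case $AB\subseteq P^2$ we obtain $ab\in(abab)$, hence $ab=ababr$ for some $r\in R$; rewriting this as $ab(1-abr)=0$ and cancelling the unit $1-abr$ (note $abr\in M$) yields $ab=0$. In the case $A\subseteq P$ we obtain $a\in(ab)$, so $a=abr$ and $a(1-br)=0$; since $br\in M$, again $a=0$ and hence $ab=0$.

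The remaining case $B=(b)\subseteq P=(ab)$ is the step I expect to be the main obstacle, because the naive cancellation breaks down: writing $b=abr$ does not expose a factor of the form $1-(\text{non-unit})$ to cancel, and $R$ is not assumed Noetherian so one cannot simply iterate. Here I would instead invoke Nakayama's lemma. Using the duo condition one checks $(ab)=a\,(b)\subseteq M(b)$, and since $(b)$ is a two-sided ideal $M(b)\subseteq(b)$; combined with the case assumption this gives $(b)\subseteq(ab)\subseteq M(b)\subseteq(b)$, so $(b)=M(b)$. As $(b)=Rb$ is a cyclic (hence finitely generated) module over the local ring $R$ and $M=J(R)$, Nakayama's lemma forces $(b)=0$, i.e.\ $b=0$, whence $ab=0$.

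Since $a,b\in M$ were arbitrary, this establishes $M^2=0$. The points that will require careful justification are the ideal-product identities $(a)(b)=(ab)$ and $(ab)=a(b)\subseteq M(b)$ in the noncommutative duo setting (each relying on $Rb=bR$ to move scalars past $b$), and the correct formulation of Nakayama's lemma for the finitely generated module $(b)$ over the local ring $R$ in the final case.
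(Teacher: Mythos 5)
Your proposal is correct and follows essentially the same route as the paper: fix $a,b\in M$, form the principal ideal $(ab)$, apply almost primeness to get the three cases $AB\subseteq P^2$, $A\subseteq P$, $B\subseteq P$, and in each case cancel a unit of the form $1-m$ with $m\in M$. The only divergence is the case $(b)\subseteq(ab)$, which you flag as the obstacle and resolve via Nakayama; the paper instead cancels on the \emph{left}, writing $b=r(ab)=(ra)b$ with $ra\in Ra=aR\subseteq M$, so that $(1-ra)b=0$ forces $b=0$ -- the duo hypothesis makes the right-sided obstruction you noticed disappear once you switch sides. Your Nakayama alternative is equally valid (and is exactly what the paper's remark following the theorem suggests), since $(b)=Rb$ is cyclic and $(b)=M(b)$ with $M=J(R)$.
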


\begin{proof} Let $a,b\in M{\setminus}0$. Then, the principle ideal $\langle ab\rangle$ is almost prime by assumption.
Assume that $\langle ab\rangle\neq0$. Then, $\langle a\rangle\langle b\rangle=\langle ab\rangle$, i.e., $\langle a\rangle\langle b\rangle\subseteq\langle ab\rangle$.

If $\langle a\rangle\langle b\rangle\not\subseteq\langle ab\rangle^2$ then either $\langle a\rangle\subseteq\langle ab\rangle$ or $\langle b\rangle\subseteq\langle ab\rangle$. 
This implies that $\langle a\rangle=\langle ab\rangle$ or $\langle b\rangle=\langle ab\rangle$. 
If $\langle a\rangle=\langle ab\rangle$, then $a=abr$ for some $r\in R$ due to the fact that $(ab)R=R(ab)$.
Hence, $a(1-br)=0$. Therefore, either $1-br=0$ or $1-br$ is a zero divisor. Clearly $1-br$ is not a unit, and so $br\in M$ is a unit by $(4)$ of Theorem \ref{local}.
Thus, $M=R$ which is a contradiction. Similarly, if $\langle b\rangle=\langle ab\rangle$, then we have the same contradiction.

So $\langle ab\rangle=\langle a\rangle\langle b\rangle\subseteq\langle ab\rangle^2=\langle (ab)^2\rangle$.
Then, $ab=(ab)^2s$ for some $s\in R$.
Thus, $ab(1-abs)=0$. If $ab\neq 0$ then similar to the first case we get again the contradiction $M=R$.
Hence, $ab=0$ which implies that $M^2=0$.
\end{proof}

\begin{remark} In the proof of Theorem \ref{local1}, one can conclude from $\langle a\rangle=\langle ab\rangle$,
$\langle b\rangle=\langle ab\rangle$ or $\langle ab\rangle= \langle ab\rangle^2$ that
\[
\langle a\rangle=0\ , \ \ \langle b\rangle=0 \ \ \text{\rm or} \ \ \langle ab\rangle=0,
\]
directly using Nakayama's lemma (Lemma 4.22 in \cite{L}).
\end{remark}

\subsection{Decomposable rings}

\begin{definition} A ring $R$ is called decomposable if $R=R_1\times R_2$ for some nontrivial rings $R_1$ and $R_2$.
\end{definition}

It is well known that any (right) ideal $R_1\times R_2$ has the form $I \times J$, where $I$ and $J$ are (right) ideals of $R_1$ and $R_2$, respectively.
In addition, the ring $R_1\times R_2$ is with unity (S-unital, regular) if and only if $R_1$ and $R_2$ are with unity (S-unital, regular). 

\begin{lemma}\label{1} Let $R_1$ and $R_2$ be noncommutative rings, and $P$ be a right ideal of $R_1$. Then the following statements are equivalent.   

$(1)$ $P$ is an almost prime right ideal of $R_1$.

$(2)$ $P\times R_2$ is an almost prime right ideal of  $R_1\times R_2$.  
\end{lemma}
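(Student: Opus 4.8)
The plan is to exploit the componentwise description of right ideals in a product. Recall from the remark preceding the statement that every right ideal of $R_1\times R_2$ has the form $I\times J$ for right ideals $I$ of $R_1$ and $J$ of $R_2$, and that products multiply componentwise, so that $(I_1\times J_1)(I_2\times J_2)=(I_1I_2)\times(J_1J_2)$. In particular $(P\times R_2)^2=P^2\times R_2^2$. I would also record at the outset the elementary but decisive observation that for any right ideals $A_2,B_2$ of $R_2$ one has $A_2B_2\subseteq R_2R_2=R_2^2$, since $A_2,B_2\subseteq R_2$; this is the point that keeps the second coordinate from interfering. Finally, properness transfers in both directions, because $P\times R_2$ is proper exactly when $P\neq R_1$.

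For $(1)\Rightarrow(2)$, I would take right ideals $A=A_1\times A_2$ and $B=B_1\times B_2$ of $R_1\times R_2$ with $AB\subseteq P\times R_2$ and $AB\not\subseteq(P\times R_2)^2$. The first containment gives $A_1B_1\subseteq P$. Since $(P\times R_2)^2=P^2\times R_2^2$, the non-containment $AB\not\subseteq(P\times R_2)^2$ means that $A_1B_1\not\subseteq P^2$ or $A_2B_2\not\subseteq R_2^2$; but the latter can never occur by the observation above, so $A_1B_1\not\subseteq P^2$. Almost primeness of $P$ in $R_1$ then yields $A_1\subseteq P$ or $B_1\subseteq P$, and since $A_2,B_2\subseteq R_2$ this upgrades to $A\subseteq P\times R_2$ or $B\subseteq P\times R_2$, as required.

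For $(2)\Rightarrow(1)$, given right ideals $A_1,B_1$ of $R_1$ with $A_1B_1\subseteq P$ and $A_1B_1\not\subseteq P^2$, I would test almost primeness of $P\times R_2$ on the right ideals $A_1\times R_2$ and $B_1\times R_2$ (both genuinely right ideals, as a direct check shows). Their product is $(A_1B_1)\times R_2^2$, which lies in $P\times R_2$ but not in $P^2\times R_2^2=(P\times R_2)^2$, the latter precisely because $A_1B_1\not\subseteq P^2$. Hence $A_1\times R_2\subseteq P\times R_2$ or $B_1\times R_2\subseteq P\times R_2$, that is, $A_1\subseteq P$ or $B_1\subseteq P$.

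There is no serious obstacle here; the entire content is the observation that $A_2B_2\subseteq R_2^2$ holds automatically, which is what renders the $R_2$-coordinate of $(P\times R_2)^2$ harmless in the forward direction. The only routine care needed is to verify that $A_1\times R_2$ and $B_1\times R_2$ are right ideals and that properness is preserved, both of which are immediate.
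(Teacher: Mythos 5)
Your proposal is correct and follows essentially the same route as the paper: decompose right ideals of the product componentwise for the forward direction, and test $P\times R_2$ against $A_1\times R_2$ and $B_1\times R_2$ for the converse. The only difference is that you make explicit the observation $A_2B_2\subseteq R_2^2$, which the paper uses silently.
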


\begin{proof}  $(1)\Rightarrow (2)$ Let
$(A_1\times B_1)(A_2\times B_2)\subseteq(P\times R_2)$, and 
\[
(A_1\times B_1)(A_2\times B_2)\not\subseteq(P\times R_2)^2,
\]
where $A_1, A_2$ are right ideals of $R_1$, and $B_1, B_2$ are right ideals of $R_2$. Then 
\[
(A_1A_2)\times(B_1B_2)\subseteq P\times R_2,
\]
and $(A_1A_2)\times (B_1B_2)\not\subseteq(P^2\times R_2^2)$.
Thus, $A_1A_2\subseteq P$ and $A_1A_2\not\subseteq P^2$. Hence, by $(1)$ either $A_1\subseteq P$ or $A_2\subseteq P$.
This implies that either $A_1\times B_1\subseteq P\times R_2$ or $A_2\times B_2\subseteq P\times R_2$.

$(2)\Rightarrow (1)$ Let $I,J$ be right ideals of $R_1$ such that $IJ\subseteq P$ and $IJ \not\subseteq P^2$. Then
\[
(I\times R_2)(J\times R_2)\subseteq(P\times R_2),
\]
and $(I\times R_2)(J\times R_2)\not\subseteq(P\times R_2)^2$.
Thus, by $(2)$ either $I\times R_2\subseteq P\times R_2$ or $J\times R_2 \subseteq P\times R_2$, which implies that either $I \subseteq P$ or $J \subseteq P$.
\end{proof}

\begin{remark}\label{r1} As a modification of Lemma \ref{1}, one can easily show that if $P$ is a right ideal of $R_2$, then $P$ is an almost prime right ideal of $R_2$ if and only if  $R_1\times P$ is an almost prime right ideal of $R_1\times R_2$.
\end{remark}

\begin{theorem}\label{2} Let $P$ be a right ideal of $R=R_1\times R_2$, where $R_1$ and $R_2$ are right \text{\rm S}-unital rings. Then $P$ is an almost prime right ideal of $R$ if and only if it has one of the following forms. 

$(1)$ $I\times R_2$, where $I$ is an almost prime right ideal of $R_1$.

$(2)$ $R_1\times J$, where $J$ is an almost prime right ideal of $R_2$.
 
$(3)$ $I\times J$, where $I$ and $J$ are idempotent right ideals of $R_1$ and $R_2$, respectively.
\end{theorem}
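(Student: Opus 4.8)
The plan is to use the structural facts recalled just before the statement: every right ideal $P$ of $R=R_1\times R_2$ has the form $P=I\times J$ with $I$ a right ideal of $R_1$ and $J$ a right ideal of $R_2$, and products are computed coordinatewise, $(A_1\times A_2)(B_1\times B_2)=(A_1B_1)\times(A_2B_2)$. From right S-unitality I will use only $IR_1=I$ and $JR_2=J$ (the identity $PR=P$ of the opening Remark). The ``if'' direction is then immediate: form $(1)$ is Lemma \ref{1} and form $(2)$ is Remark \ref{r1}, while in form $(3)$ idempotence of $I$ and $J$ gives $P^2=I^2\times J^2=I\times J=P$, so $P$ is idempotent and hence almost prime.

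For the ``only if'' direction I would write $P=I\times J$ and split into cases according to whether the coordinates are improper. Since $P$ is proper we cannot have both $I=R_1$ and $J=R_2$. If $J=R_2$ then $I\neq R_1$ and $P=I\times R_2$, so Lemma \ref{1} forces $I$ to be almost prime, giving form $(1)$; symmetrically $I=R_1$ yields form $(2)$ by Remark \ref{r1}. The remaining case $I\neq R_1$ and $J\neq R_2$ is the heart of the matter, and there I claim $I$ and $J$ must both be idempotent.

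To prove that claim I argue by contradiction. Suppose $I\neq I^2$, i.e.\ $I\not\subseteq I^2$, and consider the right ideals $A=I\times R_2$ and $B=R_1\times 0$. Then
\[
AB=(IR_1)\times(R_2\cdot 0)=I\times 0\subseteq I\times J=P,
\]
whereas $I\times 0\not\subseteq I^2\times J^2=P^2$ precisely because $I\not\subseteq I^2$. However $A\not\subseteq P$ since $R_2\not\subseteq J$, and $B\not\subseteq P$ since $R_1\not\subseteq I$, contradicting that $P$ is almost prime. Hence $I=I^2$, and the symmetric choice $A=R_1\times J$, $B=0\times R_2$ (using $JR_2=J$) forces $J=J^2$, placing $P$ in form $(3)$.

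The only genuine obstacle is finding the right test pair in this last case. The idea is that multiplying $I\times R_2$ by the corner ideal $R_1\times 0$ kills the second coordinate (so $AB$ lands trivially inside both $P$ and $P^2$ there) while keeping the first coordinate equal to $I$, which escapes $I^2$ exactly when $I$ fails to be idempotent; meanwhile the full factors $R_1$ and $R_2$ guarantee that neither $A$ nor $B$ is contained in $P$. Once this pair is in hand, every verification is a one-line coordinatewise check.
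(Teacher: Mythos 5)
Your proposal is correct and follows essentially the same route as the paper: the same case split on whether $I$ or $J$ is improper, the same appeal to Lemma \ref{1} and Remark \ref{r1}, and the same test pair built from a factor of the form $(\cdot)\times R_2$ against the corner ideal $R_1\times 0$, with right S-unitality supplying $IR_1=I$. The only (harmless, slightly cleaner) difference is that you test with $A=I\times R_2$ directly, whereas the paper picks $x\in I\setminus I^2$ and uses $(x\rangle\times R_2$.
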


\begin{proof} $(\Rightarrow)$ Suppose that $P$ is an almost prime right ideal of $R$, then $P= I\times J$ for some right ideals $I$ and $J$ of $R_1$ and $R_2$, respectively.
If $I=R_1$ then $J$ is an almost prime right ideal of $R_2$ by Remark \ref{r1}. Similarly, if $J=R_2$ then $I$ is an almost prime right ideal of $R_1$ by Lemma \ref{1}.

Now suppose that $I\neq R_1$ and $J \neq R_2$. Without loss of generality, assume that $I$ is not an idempotent right ideal of $R_1$.
Then, there exists an element $x\in I{\setminus I^2}$. Thus, $(x\rangle\subseteq I$ and $(x\rangle\not\subseteq I^2$. Therefore,
\[
(x\rangle\times0\subseteq I\times J \ \ \text{\rm and} \ \  (x\rangle\times0\ \not\subseteq I^2 \times J^2.
\]
Now assume that $((x\rangle\times R_2)(R_1\times0)\subseteq I^2 \times J^2$.
Because $R_1$ is right S-unital, we get that $(x\rangle=(x\rangle R_1$.
Thus
\[
(x\rangle\times0=(x\rangle R_1\times R_2\cdot0=((x\rangle\times R_2)(R_1\times0)\subseteq I^2\times J^2,
\]
which is a contradiction. Therefore, $((x\rangle\times R_2)(R_1\times0)\not\subseteq I^2 \times J^2$.
On the other hand, $((x\rangle\times R_2)(R_1\times0)=(x\rangle\times0\subseteq I\times J$.
Because the $I\times J$ is an almost prime right ideal, we have that
either $(x\rangle\times R_2\subseteq I\times J$ or $R_1\times0\subseteq I\times J$.
Hence, either $R_2=J$ or $R_1 =I$, that yields a contadiction. Thus $I$ is idempotent. Similarly we can see that $J$ is an idempotent right ideal of $R_2$.

$(\Leftarrow)$ Suppose that $P$ has one of the three forms.
Then we come up with the proof by Lemma \ref{1}, Remark \ref{r1} and by the fact that every idempotent right ideal is almost prime. 
\end{proof}

The following proposition was proved for direct product of commutative rings in \cite{B}. Here, we set a similar proof for noncommutative rings.

\begin{proposition}\label{prop21}
Let $R_1$ and $R_2$ be right \text{\rm S}-unital rings. If every right ideal of $R_1$ and $R_2$ is a product of almost prime right ideals,
then every right ideal of $R_1\times R_2$ is a product of almost prime right ideals. 
\end{proposition}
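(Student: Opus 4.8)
The plan is to reduce the problem to a single right ideal of the product ring and then lift the component factorizations through Lemma \ref{1} and Remark \ref{r1}. Every right ideal of $R=R_1\times R_2$ has the form $I\times J$, where $I$ is a right ideal of $R_1$ and $J$ is a right ideal of $R_2$, so it suffices to write $I\times J$ as a product of almost prime right ideals of $R$. By hypothesis I may factor $I=P_1\cdots P_m$ and $J=Q_1\cdots Q_n$, where each $P_i$ is an almost prime right ideal of $R_1$ and each $Q_j$ is an almost prime right ideal of $R_2$. By Lemma \ref{1} each $P_i\times R_2$ is an almost prime right ideal of $R$, and by Remark \ref{r1} each $R_1\times Q_j$ is one as well; these are the intended building blocks.

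First I would dispose of the degenerate cases, which are clean. If $I=R_1$, then, computing coordinatewise and using $R_1R_1=R_1$ (right S-unitality),
\[
I\times J=R_1\times J=(R_1\times Q_1)\cdots(R_1\times Q_n),
\]
since the first coordinate is $R_1\cdots R_1=R_1$ and the second is $Q_1\cdots Q_n=J$; this exhibits $R_1\times J$ as a product of almost prime right ideals. The case $J=R_2$ is symmetric. Hence the real content is the mixed case, in which both $I$ and $J$ are proper.

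For the mixed case the natural candidate is the coordinatewise product
\[
(P_1\times R_2)\cdots(P_m\times R_2)\,(R_1\times Q_1)\cdots(R_1\times Q_n).
\]
Its first coordinate is $P_1\cdots P_m\,R_1^{\,n}=(P_1\cdots P_m)R_1=IR_1=I$, where I use $R_1^{\,n}=R_1$ together with the identity $PR=P$ for a right ideal $P$ of a right S-unital ring (the fact recorded in the remark after the definitions). The second coordinate, however, evaluates to $R_2^{\,m}\,Q_1\cdots Q_n=R_2J$, and here the full factor $R_2$ multiplies $J$ from the \emph{left}. The only S-unital identity available gives $JR_2=J$, not $R_2J=J$, so this coordinate does not obviously collapse to $J$.

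This left-multiplication mismatch is exactly where I expect the main difficulty to lie. Reversing the order of the two blocks cleans the second coordinate (via $JR_2=J$) but then spoils the first, producing $R_1I$ rather than $I$; so no single linear ordering of the form-(1) and form-(2) building blocks can make both coordinates collapse at once, precisely because right (as opposed to two-sided) S-unitality is asymmetric. The way I would try to break this deadlock is to bring in the third type of almost prime right ideal furnished by Theorem \ref{2}, the idempotent products $E\times F$, to absorb the offending full-ring factor, or else to show that under the standing hypothesis the one-sided product already satisfies $R_2J=J$ (equivalently $R_1I=I$). Establishing that collapse is the crux of the argument; once it is in hand, the remaining verification is routine coordinatewise bookkeeping combined with the fact that idempotent, and hence almost prime, right ideals behave well under Lemma \ref{1} and Remark \ref{r1}.
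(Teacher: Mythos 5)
Your proposal is incomplete, and the gap is exactly where you locate it: the mixed case $I\neq R_1$, $J\neq R_2$ is never proved. You correctly reduce to $P=I\times J$, correctly settle the cases $I=R_1$ and $J=R_2$ (these match cases $(1)$ and $(2)$ of the paper's proof), and correctly compute that the natural candidate $(P_1\times R_2)\cdots(P_m\times R_2)(R_1\times Q_1)\cdots(R_1\times Q_n)$ equals $I\times R_2J$ rather than $I\times J$, with the reversed ordering giving $R_1I\times J$ instead. But you then stop, offering two possible strategies (absorb the defect using idempotent factors of the third type from Theorem \ref{2}, or prove the collapse $R_2J=J$) without carrying out either. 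Since the mixed case is the only case with real content, the proposal as written does not prove the proposition.

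What is worth knowing, however, is that your diagnosis pinpoints a genuine defect in the paper's own argument, which follows the same route as yours. The paper handles its case $(3)$ by simply asserting the identity
\[
(A_1\cdots A_n)\times(B_1\cdots B_m)=(A_1\times R_2)\cdots(A_n\times R_2)(R_1\times B_1)\cdots(R_1\times B_m),
\]
whose right-hand side, computed coordinatewise, is $(A_1\cdots A_n)R_1^{\,m}\times R_2^{\,n}(B_1\cdots B_m)=I\times R_2J$ --- precisely the expression you refused to collapse, since right S-unitality yields only $JR_2=J$ and says nothing about $R_2J$. The asserted identity is in fact false in general: take $R_1=R_2=M_2(\mathbb{Q})$, which satisfies the hypothesis of the proposition (every right ideal has the form $eM_2(\mathbb{Q})$ with $e$ idempotent, is idempotent as an ideal, hence almost prime, hence a one-factor product), and let $J=eM_2(\mathbb{Q})$ be a rank-one right ideal; then $R_2J=M_2(\mathbb{Q})\,e\,M_2(\mathbb{Q})=M_2(\mathbb{Q})\neq J$ because $J$ is not two-sided. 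In this example the conclusion of the proposition survives anyway, since $I\times J$ is a product of idempotents and hence itself almost prime by Theorem \ref{2} --- exactly the ``third form'' escape you gestured at but did not develop. So the situation is: you and the paper take the same approach; the paper silently performs the step you flagged as unjustified; and that step is invalid for right ideals that are not two-sided. A correct treatment of the mixed case genuinely needs an additional idea, e.g.\ restricting to two-sided ideals of S-unital rings (where both $JR_2=J$ and $R_2J=J$ hold), or a real argument via the idempotent form.
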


\begin{proof}
Let $I$ and $J$ be right ideals of $R_1$ and $R_2$, respectively. Let
$I=A_1\cdots A_n$ and $J=B_1\cdots B_m$ for almost prime right ideals $A_i$ and $B_j$,
and let $P$ be a right ideal of $R_1\times R_2$.
Then $P$ must have one of the following three forms by Theorem \ref{2}.

\noindent $(1)$ If $P=I \times R_2$, then 
\[
P=(A_1\cdots A_n)\times R_2=(A_1\times R_2)\cdots(A_n\times R_2).
\]
\noindent $(2)$ If $P=R_1\times J$, then
\[
P=R_1\times(B_1\cdots B_m)=(R_1\times B_1)\cdots(R_1\times B_m).
\]
\noindent $(3)$ Finally, if $P=I\times J$, then
\begin{align}
P=&(A_1\cdots A_n)\times(B_1\cdots B_m)=(A_1\times R_2)\cdots(A_n\times R_2)(R_1\times B_1)\cdots(R_1\times B_m).\nonumber
\end{align}
In all cases we get a product of almost prime right ideals of $R$ due to Theorem \ref{2}.
\end{proof}

Note that every fully right  idempotent ring is a fully almost prime right ring. However, the converse does not hold in general. See \cite{AbouFindik} (Example 2.1) for a counterexample.
In the following theorem we show that the equivalence holds in direct product of S-unital rings.

\begin{theorem}\label{3}
Let $R=R_1\times R_2$, where $R_1$, $R_2$ are  right \text{\rm S}-unital rings. Then the following statements are equivalent. 

$(1)$ $R$ is a fully right idempotent ring.

$(2)$ $R$ is a fully almost prime right ring.
\end{theorem}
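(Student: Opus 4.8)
The plan is to handle the two implications separately, with $(1)\Rightarrow(2)$ costing essentially nothing and $(2)\Rightarrow(1)$ carrying all the content. For $(1)\Rightarrow(2)$ no product structure is needed: it is exactly the general fact, already recorded in the remarks following the definitions, that every fully right idempotent ring is a fully almost prime right ring, so I would dispose of it in a single line.

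The work is in $(2)\Rightarrow(1)$, and the strategy is to push idempotence down to the two factors and then reassemble. First I would recall the standing facts that each right ideal of $R=R_1\times R_2$ has the form $I\times J$ with $I,J$ right ideals of $R_1,R_2$, and that $(I\times J)^2=I^2\times J^2$. The engine of the argument is Theorem \ref{2}: when an almost prime right ideal $I\times J$ satisfies $I\neq R_1$ and $J\neq R_2$, it can match neither form $(1)$ (which forces $J=R_2$) nor form $(2)$ (which forces $I=R_1$), so it must be of form $(3)$, whence both $I$ and $J$ are idempotent.

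The decisive trick is to place the zero ideal in one coordinate. Given an arbitrary proper right ideal $I$ of $R_1$, I would look at $I\times 0$. Since $R_2\neq 0$, its second coordinate $0$ is a proper right ideal of $R_2$, so $I\times 0$ is a proper right ideal of $R$ and hence almost prime by hypothesis $(2)$. As $I\neq R_1$ and $0\neq R_2$, Theorem \ref{2} forces form $(3)$, and in particular $I$ is idempotent. Therefore every proper right ideal of $R_1$ is idempotent; the symmetric argument with $0\times J$ gives the same for $R_2$. For the remaining improper case one needs only $R_i=R_i^2$, which is immediate from right S-unitality since $a\in aR_i\subseteq R_i^2$ for every $a\in R_i$.

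Finally I would assemble the conclusion: given any proper right ideal $P=I\times J$ of $R$, each of $I$ and $J$ is idempotent by the previous step (whether it is the whole factor or a proper right ideal), so $P^2=I^2\times J^2=I\times J=P$. Thus every proper right ideal of $R$ is idempotent, i.e. $R$ is fully right idempotent. I expect the only point requiring care to be the case analysis with Theorem \ref{2}, specifically checking that forms $(1)$ and $(2)$ are genuinely excluded once a coordinate is the zero ideal; this is precisely where the nontriviality of the factors ($0\neq R_1$ and $0\neq R_2$) is used.
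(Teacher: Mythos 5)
Your proof is correct and follows essentially the same route as the paper: both arguments run on Theorem \ref{2} together with the trick of testing the ideal $I\times 0$ (resp. $0\times J$) against the fully-almost-prime hypothesis. The only difference is organizational: you invoke the classification of Theorem \ref{2} a second time on $I\times 0$ to force form $(3)$ and then reassemble, whereas the paper cases on the form of a putative non-idempotent ideal and reruns the element computation $((x\rangle\times R_2)(R_1\times 0)$ from the proof of Theorem \ref{2} to reach the same contradiction.
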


\begin{proof}
$(1)\Rightarrow (2)$ is trivial by the fact that every idempotent right ideal is an almost prime right ideal.

$(2)\Rightarrow (1)$ Assume that $P$ is a right ideal of $R$ which is not idempotent.
Then by Theorem \ref{2}, we have three cases for the form of $P$.

Case 1. $P=I\times R_2$, where $I$ is an almost prime right ideal of $R_1$.
Then $I$ is not idempotent ideal of $R_1$. So there exist an element $x\in I{\setminus}I^2$,
which yields that $(x\rangle\subseteq I$ and $(x\rangle\not\subseteq I^2$.
Hence,
\[
(x\rangle\times0\subseteq I\times0 \ \ \text{\rm and} \ \   (x\rangle\times0\not\subseteq I^2 \times0.
\]
Thus $((x\rangle\times R_2)(R_1\times0)\subseteq I\times0$ and
\[
((x\rangle\times R_2)(R_1\times0)\not\subseteq I^2 \times0^2=(I\times0)^2.
\]
Since $I\times0$ is an almost prime right ideal, then either $(x\rangle\times R_2\subseteq I\times0$ or
$R_1\times0\subseteq I\times0$. This implies that either $R_2=0$ or $I=R_1$ which is a contradiction. So $I$ must be idempotent.

Case 2. $P=R_1\times J$ where $J$ is an almost prime right ideal of $R_2$. Then similar to the proof of case $(1)$, one can show that $J$ must be idempotent.

Case 3. $P=I\times J$, where $I$ and $J$ are idempotent right ideals of $R_1$ and $R_2$, respectively. Then clearly $P$ is an right idempotent ideal of $R$.
\end{proof}

\begin{corollary}\label{corollarynew}
Let $R_1$ and $R_2$ be right \text{\rm S}-unital rings. Then the following statements are equivalent. 

$(1)$ $R_1\times R_2$ is a fully almost prime right ring.

$(2)$ $R_1$ and $R_2$ are fully right idempotent rings.
\end{corollary}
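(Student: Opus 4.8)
The plan is to reduce the statement to Theorem \ref{3}, which already carries out the substantive work. Theorem \ref{3} asserts that for right S-unital rings $R_1,R_2$ the product $R=R_1\times R_2$ is a fully almost prime right ring if and only if it is a fully right idempotent ring. Granting this, the corollary follows once I establish the purely structural equivalence: $R_1\times R_2$ is fully right idempotent if and only if both $R_1$ and $R_2$ are fully right idempotent. Thus the genuine content is inherited from Theorem \ref{3}, and what remains is bookkeeping with idempotence in a direct product.

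First I would recall that every right ideal of $R_1\times R_2$ has the form $I\times J$ for right ideals $I\subseteq R_1$ and $J\subseteq R_2$, and that $(I\times J)^2=I^2\times J^2$. Hence $I\times J$ is idempotent precisely when $I^2=I$ and $J^2=J$. This single observation drives both directions of the structural equivalence.

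For $(2)\Rightarrow(1)$, assuming $R_1$ and $R_2$ are fully right idempotent, I would take an arbitrary proper right ideal $I\times J$ of $R$ and verify idempotence componentwise: each proper component is idempotent by hypothesis, while a full component $R_i$ satisfies $R_i^2=R_i$ because right S-unitality gives $a\in aR_i$ for all $a$, so $R_i=R_iR_i$. Therefore $(I\times J)^2=I\times J$ and $R$ is fully right idempotent; Theorem \ref{3} then upgrades this to fully almost prime. For $(1)\Rightarrow(2)$, I would start from the fully almost prime ring $R$, which is fully right idempotent by Theorem \ref{3}, pick any proper right ideal $I$ of $R_1$, and test it through the right ideal $I\times 0$ of $R$, which is proper since $0\subsetneq R_2$. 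Idempotence of $I\times 0$ forces $I^2=I$, so $R_1$ is fully right idempotent; the symmetric argument, using $0\times J$, handles $R_2$.

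I do not expect a genuine obstacle here, since the difficult analysis (the three-form classification of Theorem \ref{2} and the idempotence argument of Theorem \ref{3}) is already available. The only points requiring a little care are the appeal to S-unitality to guarantee $R_i^2=R_i$ for a whole factor and the choice of the proper testing ideals $I\times 0$ and $0\times J$ in the forward direction, both of which are routine.
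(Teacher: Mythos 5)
Your proposal is correct and follows the same route the paper intends: the corollary is stated as an immediate consequence of Theorem \ref{3} (the paper gives no separate proof), combined with the routine observation that a right ideal $I\times J$ of $R_1\times R_2$ is idempotent exactly when $I^2=I$ and $J^2=J$, with right S-unitality supplying $R_i^2=R_i$ for a full factor. Your filling-in of the two directions, including the choice of the testing ideals $I\times 0$ and $0\times J$, is exactly the standard bookkeeping the authors leave implicit.
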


\begin{remark}\label{newest}
Note that all the results in Lemma \ref{1}, Remark \ref{r1}, Theorem \ref{2}, Proposition \ref{prop21}, Theorem \ref{3}, and Corollary \ref{corollarynew}
hold also for two sided ideals.
\end{remark}

\noindent Recall that an ideal $P$ of a ring $R$ is called semiprime, if $I^2\subseteq P$ implies $I\subseteq P$ for ideals $I$ of $R$.
The next result considers two sided ideals of decomposable rings.

\begin{corollary} Let $R=R_1\times R_2$, where $R_1$, $R_2$ are  right \text{\rm S}-unital rings. If the set of the ideals of R is totally ordered then the following statements are equivalent. 

$(1)$ $R$ is a fully almost prime ring.

$(2)$ $R$ is a fully idempotent ring.  

$(3)$ $R$ is a fully prime ring.  

$(4)$ $R$ is a fully semiprime ring.
\end{corollary}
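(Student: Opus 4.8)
The plan is to take statement $(2)$, that $R$ is a fully idempotent ring, as a hub and to prove that each of $(1)$, $(3)$, and $(4)$ is equivalent to it; this is natural because the three extra features---decomposability, total ordering of the ideals, and semiprimeness---each attach most directly to idempotence. I will use throughout that $R = R_1 \times R_2$ is right S-unital (because $R_1$ and $R_2$ are), so in particular $R^2 = R$.

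For $(1) \Leftrightarrow (2)$ I would simply invoke the two-sided form of Theorem \ref{3}, which is licensed by Remark \ref{newest}: for $R = R_1 \times R_2$ with both factors right S-unital, $R$ is a fully idempotent ring if and only if it is a fully almost prime ring. This step is a direct citation and requires no further argument.

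For $(2) \Leftrightarrow (3)$ I would apply Theorem 1.2 of \cite{Bl}, recorded among the introductory facts, which states that a ring is fully prime if and only if it is fully idempotent and its set of ideals is totally ordered under inclusion. Since the total-ordering hypothesis is granted in the statement, that clause is automatically satisfied, and the cited theorem reduces exactly to the equivalence of $(2)$ and $(3)$.

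The only equivalence demanding an actual argument is $(2) \Leftrightarrow (4)$, and it in fact holds in any ring, independently of decomposability and total ordering. For $(2) \Rightarrow (4)$, let $P$ be a (proper) ideal and $I$ an ideal with $I^2 \subseteq P$; by the convention that ideals are proper (equivalently, since $R^2 = R$ rules out $I = R$), the ideal $I$ is idempotent, so $I = I^2 \subseteq P$ and $P$ is semiprime. For $(4) \Rightarrow (2)$, given any ideal $I$, the ideal $I^2 \subseteq I$ is again proper and hence semiprime by hypothesis; applying the defining implication of semiprimeness to the trivial inclusion $I^2 \subseteq I^2$ yields $I \subseteq I^2$, whence $I = I^2$ is idempotent and $R$ is fully idempotent. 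The main point to watch is this last move---testing the semiprime ideal $I^2$ against itself---while the forward direction only needs the minor bookkeeping that $R^2 = R$; no step of the argument presents a genuine obstacle.
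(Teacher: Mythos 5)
Your proposal is correct and follows essentially the same route as the paper: both reduce $(1)\Leftrightarrow(2)$ to the two-sided form of Theorem \ref{3} via Remark \ref{newest}, both get the equivalence with $(3)$ from Theorem 1.2 of \cite{Bl} using the total-ordering hypothesis, and both close the loop with the same key observation that a semiprime ideal $P^2$ tested against the inclusion $P^2\subseteq P^2$ forces $P=P^2$. The only cosmetic difference is that you arrange the implications as a hub around $(2)$ (proving $(2)\Rightarrow(4)$ directly) whereas the paper runs the cycle $(2)\Rightarrow(3)\Rightarrow(4)\Rightarrow(2)$; the mathematical content is the same.
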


\begin{proof}$(1)\Rightarrow (2)$ Straightforward by Remark \ref{newest}. 

$(2)\Rightarrow (1)$ Trivial by definition. 

$(2)\Rightarrow (3)$ A consequence of Theorem 1.2 of \cite {Bl}.

$(3)\Rightarrow (4)$ Trivial by definition. 

$(4)\Rightarrow (2)$ Let $P$ be an ideal of $R$. Because $P^2$ is semiprime, $P^2\subseteq P^2$ implies $P\subseteq P^2\subseteq P$, and thus $P=P^2$. 
\end{proof}

\begin{theorem}\label{4}
Let $R=\prod_{i=1}^n R_i$ for the rings $R_1,\ldots,R_n$. If R is a fully almost prime right ring, then so is $R_i$ for every $i=1,\ldots,n$.
\end{theorem}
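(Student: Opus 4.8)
The plan is to reduce the statement to the two-factor situation already captured by Lemma \ref{1} and Remark \ref{r1}, and then induct on $n$. The key observation making this legitimate is that Lemma \ref{1} and Remark \ref{r1} are stated for arbitrary noncommutative rings $R_1,R_2$, with no S-unitality hypothesis (unlike Theorem \ref{2} and Theorem \ref{3}), so they apply verbatim to the factors $R_i$ occurring here, even though Theorem \ref{4} assumes nothing about them.

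First I would settle the base case $n=2$: if $R_1\times R_2$ is a fully almost prime right ring, then so is $R_1$ (and symmetrically $R_2$). Let $P$ be an arbitrary proper right ideal of $R_1$. Then $P\times R_2$ is a proper right ideal of $R_1\times R_2$, hence an almost prime right ideal because $R$ is fully almost prime. By the direction $(2)\Rightarrow(1)$ of Lemma \ref{1}, $P$ is an almost prime right ideal of $R_1$. Since $P$ was arbitrary, $R_1$ is a fully almost prime right ring; running the same argument through Remark \ref{r1} shows that $R_2$ is fully almost prime as well.

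For the inductive step I would group the factors as $R=R_1\times R'$ with $R'=\prod_{i=2}^{n}R_i$ and apply the base case to this two-fold decomposition: it yields at once that $R_1$ is fully almost prime and that $R'$ is fully almost prime. As $R'$ is a product of $n-1$ rings and is fully almost prime, the induction hypothesis gives that $R_i$ is fully almost prime for every $2\le i\le n$; together with $R_1$ this completes the induction.

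I do not expect a genuine obstacle here; the only points needing care are formal. One must check that $P\times R_2$ is proper whenever $P$ is proper (immediate from $P\neq R_1$), and that the grouping $R=R_1\times R'$ is an honest product decomposition so that both the base case and the induction hypothesis apply to it. As an alternative that sidesteps the induction entirely, one can observe that each coordinate projection $\pi_i\colon R\to R_i$ is a surjective ring homomorphism, so $R_i$ is a homomorphic image of the fully almost prime right ring $R$, and the conclusion then follows directly from the fact (recalled in the Introduction, from \cite{A}) that a homomorphic image of a fully almost prime right ring is again a fully almost prime right ring.
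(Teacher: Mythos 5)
Your proposal is correct, but your primary route differs from the paper's: the paper's proof is precisely the one-line ``alternative'' you mention at the end, namely that each projection $\pi_i\colon R\to R_i$ is a surjective ring homomorphism and the image of a fully almost prime right ring under an epimorphism is again fully almost prime (Theorem 3.4 of \cite{A}). Your main argument instead inducts on $n$ via the two-factor case, using the direction $(2)\Rightarrow(1)$ of Lemma \ref{1} and Remark \ref{r1}; this is sound, and your observation that these two statements carry no S-unitality hypothesis (unlike Theorem \ref{2} and Theorem \ref{3}) is exactly the point that makes the reduction legitimate under the bare hypotheses of Theorem \ref{4}. What the inductive route buys is self-containedness: it relies only on results proved in the present paper rather than on the external homomorphic-image theorem from \cite{A}, and it never needs to classify all right ideals of the product (it only uses that $P\times R_2$ is a right ideal when $P$ is, which holds for arbitrary rings). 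What the projection argument buys is brevity and the absence of any induction or case analysis. Both are complete proofs; the only formal points in yours --- properness of $P\times R_2$ and the validity of the grouping $R=R_1\times\prod_{i=2}^{n}R_i$ --- are handled as you indicate.
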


\begin{proof}
Let $\pi_i:R\to R_i$ be the projective epimorphism, where
\[
\pi_i(a_1,a_2,\ldots,a_i,\ldots,a_n)=a_i,
\]
for all $i=1,\ldots,n$. Then, $R_i$ is a fully almost prime right ring for all $i=1,\ldots,n$, by Theorem 3.4 of \cite{AbouFindik}.
\end{proof}

\begin{remark}
Theorem \ref{2} shows that the converse of Theorem \ref{4} is not true in general,
in other words, the direct product of fully almost prime right rings need not to be a fully almost prime right ring.
The following corollary gives a special case such that the direct product of fully almost prime right rings is a fully almost prime right ring.
\end{remark}

\begin{corollary}\label{c411}
Let $R$ be a fully almost prime right ring, and let $I_1$ and $I_2$ be comaximal ideals of $R$.
Then
\[
R/I_1 \ , \ \ R/I_2\ , \ \ (R/I_1)\times(R/I_2)
\]
are fully almost prime right rings.
\end{corollary}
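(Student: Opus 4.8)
The plan is to handle the three assertions in turn, reducing everything to the quotient result for fully almost prime right rings (Theorem 3.6 of \cite{A}) together with the Chinese Remainder Theorem. The first two claims are immediate: since $I_1$ and $I_2$ are ideals of the fully almost prime right ring $R$, both $R/I_1$ and $R/I_2$ are fully almost prime right rings directly by Theorem 3.6 of \cite{A}. The content of the corollary therefore lies in the third assertion, which is genuinely nontrivial in view of the preceding remark: a direct product of fully almost prime right rings need not be fully almost prime, so this conclusion must come from the comaximality of $I_1,I_2$ rather than from the factors alone.

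For the product, I would first observe that $I_1\cap I_2$ is again an ideal of $R$, so that $R/(I_1\cap I_2)$ is a fully almost prime right ring by the same theorem. The remaining work is to identify this quotient with $(R/I_1)\times(R/I_2)$. To that end, consider the natural homomorphism
\[
\phi:R\longrightarrow (R/I_1)\times(R/I_2),\qquad \phi(r)=(r+I_1,\ r+I_2),
\]
whose kernel is clearly $I_1\cap I_2$. If $\phi$ is surjective, then $R/(I_1\cap I_2)\cong (R/I_1)\times(R/I_2)$, and since an isomorphic image of a fully almost prime right ring is again such a ring by the homomorphic image result (Theorem 3.4 of \cite{A}), the product inherits the property from $R/(I_1\cap I_2)$.

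The key step, and the only place where comaximality and the absence of a unit interact, is the surjectivity of $\phi$. Given $a,b\in R$, the comaximality $I_1+I_2=R$ lets me write $a-b=u+v$ with $u\in I_1$ and $v\in I_2$; setting $r=a-u=b+v$ then yields $r-a=-u\in I_1$ and $r-b=v\in I_2$, so that $\phi(r)=(a+I_1,\ b+I_2)$. Thus $\phi$ is surjective without any appeal to an identity element of $R$, and the Chinese Remainder isomorphism holds in the present non-unital setting. I expect this verification to be the main obstacle, since the standard formulation of the Chinese Remainder Theorem is usually stated for rings with unity; the explicit element $r=a-u$ sidesteps that hypothesis. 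Combining this isomorphism with the quotient results of the first paragraph completes the proof.
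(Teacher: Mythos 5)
Your proposal is correct and follows essentially the same route as the paper: both handle $R/I_1$ and $R/I_2$ via the quotient result (Theorem 3.6 of \cite{A}) and obtain the product as the image of the epimorphism $\phi(r)=(r+I_1,r+I_2)$ via the homomorphic-image result (Theorem 3.4 of \cite{A}). Your only additions are the explicit verification that comaximality makes $\phi$ surjective (which the paper leaves implicit in the word ``epimorphism'') and a harmless, slightly redundant detour through $R/(I_1\cap I_2)$, neither of which changes the argument.
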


\begin{proof} $R/I_1$ and $R/I_2$ are fully almost prime right rings by Theorem 3.6 of \cite{A}.
Next, by the epimorphism $\phi:R\rightarrow (R/I_1)\times(R/I_2)$, defined as $\phi(r)=(r+I_1,r+I_2)$, the proof completes due to Theorem 3.4 of \cite{AbouFindik}.
\end{proof}

The following theorem is known as the Chinese remainder theorem.

\begin{theorem}
Let $R$ be a right \text{\rm S}-unital ring, and let $I_1,\ldots, I_n$ be pairwise comaximal ideals of $R$, such that $I_iI_j=I_jI_i$ for all $i,j\in\{1, 2, ..., n\}$.
Then,

$(1)$ $I_1\cdots I_n=I_1\cap\cdots\cap I_n$.

$(2)$ $R/(I_1\cdots I_n)= R/(I_1\cap\cdots\cap I_n)\cong(R/I_1)\times\cdots\times(R/I_n)$.
\end{theorem}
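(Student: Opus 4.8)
The plan is to reduce both parts to the two-ideal case already carried out in the proof of Theorem~\ref{fully two sided}(2), and then to induct on $n$. The one structural fact I would record at the outset is that a right S-unital ring satisfies $R^2=R$, since $a\in aR\subseteq R^2$ forces $R\subseteq R^2\subseteq R$. The second ingredient is an auxiliary observation that comaximality is inherited by products: if $A+C=R$ and $B+C=R$ for (two sided) ideals $A,B,C$, then
\[
R=R^2=(A+C)(B+C)=AB+AC+CB+C^2\subseteq AB+C,
\]
because $AC,CB,C^2\subseteq C$, whence $AB+C=R$. Iterating this gives $\prod_{j\neq k}I_j+I_k=R$ for every $k$; in particular $I_1\cdots I_{n-1}+I_n=R$.

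For part $(1)$ I would induct on $n$. The base case $n=2$ is precisely the computation in Theorem~\ref{fully two sided}(2): for $r\in I_1\cap I_2$, right S-unitality and comaximality give $r\in rR=r(I_1+I_2)\subseteq I_2I_1+I_1I_2=I_1I_2$, using $I_1I_2=I_2I_1$, so that $I_1\cap I_2=I_1I_2$. For the inductive step I set $J=I_1\cdots I_{n-1}$; by the induction hypothesis $J=I_1\cap\cdots\cap I_{n-1}$, by the auxiliary fact $J+I_n=R$, and the hypothesis $I_iI_j=I_jI_i$ lets me commute $I_n$ past each factor, so $JI_n=I_nJ$. Applying the base case to the comaximal, commuting pair $J,I_n$ yields $JI_n=J\cap I_n$, that is, $I_1\cdots I_n=(I_1\cap\cdots\cap I_{n-1})\cap I_n=I_1\cap\cdots\cap I_n$.

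For part $(2)$ the equality $R/(I_1\cdots I_n)=R/(I_1\cap\cdots\cap I_n)$ is immediate from $(1)$. For the isomorphism I would take the ring homomorphism $\phi\colon R\to(R/I_1)\times\cdots\times(R/I_n)$ given by $\phi(r)=(r+I_1,\ldots,r+I_n)$, whose kernel is $I_1\cap\cdots\cap I_n$, and establish surjectivity; the first isomorphism theorem then finishes the argument. For surjectivity, I put $B_k=\bigcap_{j\neq k}I_j=\prod_{j\neq k}I_j$ (using $(1)$), note $I_k+B_k=R$ by the auxiliary comaximality fact, and given a target $(r_1+I_1,\ldots,r_n+I_n)$ decompose each $r_k=x_k+c_k$ with $x_k\in I_k$ and $c_k\in B_k$. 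Then $r=c_1+\cdots+c_n$ works, since for $j\neq k$ one has $c_j\in B_j\subseteq I_k$ while $c_k\equiv r_k\pmod{I_k}$, so $r\equiv r_k\pmod{I_k}$ for every $k$. Notably this avoids constructing a global identity, which $R$ need not possess.

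The step I expect to demand the most care is the auxiliary comaximality fact, as its proof rests on $R^2=R$ (hence on S-unitality) and on correctly expanding the noncommutative product $(A+C)(B+C)$ into the four terms above; once it is available, both the induction in $(1)$ and the surjectivity in $(2)$ are short. A secondary point worth verifying explicitly is that the commutativity hypothesis propagates to the partial product $J=I_1\cdots I_{n-1}$, giving $JI_n=I_nJ$, for this is exactly what permits the reuse of the base case inside the inductive step.
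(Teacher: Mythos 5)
The paper never proves this statement: it is presented as the ``well known'' Chinese remainder theorem, with no proof environment attached, so there is no argument of the authors' to compare yours against --- your proposal fills a gap the paper leaves open, and it is correct. Each of your load-bearing steps checks out: (i) right S-unitality indeed gives $R^2=R$ (from $a\in aR\subseteq R^2$), and this is exactly what substitutes for the usual appeal to $1\in R$ in the lemma $A+C=R,\ B+C=R\Rightarrow AB+C=R$, since $(A+C)(B+C)=R^2=R$ and the expansion lands inside $AB+C$; (ii) your base case $I_1\cap I_2=I_1I_2$ is word-for-word the computation the authors do carry out inside the proof of Theorem~\ref{fully two sided}$(2)$ ($r\in rR=r(I_1+I_2)\subseteq I_2I_1+I_1I_2=I_1I_2$), so your induction is the natural extension of the paper's own two-ideal argument, and the inductive step is sound because the partial product $J=I_1\cdots I_{n-1}$ is a two-sided ideal, commutes with $I_n$ by repeated use of $I_jI_n=I_nI_j$ and associativity, and is comaximal with $I_n$ by your auxiliary lemma; (iii) the surjectivity of $\phi(r)=(r+I_1,\ldots,r+I_n)$ via $r_k=x_k+c_k$ with $c_k\in B_k=\prod_{j\neq k}I_j$ avoids any global identity, and the congruences $r\equiv r_k\ (\mathrm{mod}\ I_k)$ hold because $B_j\subseteq I_k$ for $j\neq k$ (a product of two-sided ideals lies in each factor --- note this containment makes the appeal to part $(1)$ in defining $B_k$ merely cosmetic; the product alone suffices). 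The only point I would ask you to make explicit in a final write-up is the one you flag yourself: that pairwise commutation makes all reorderings of the product $\prod_{j\neq k}I_j$ equal, so that expression is unambiguous.
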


\begin{corollary}
Let $R$ be fully almost prime ring, which is right \text{\rm S}-unital.
If $M_1$ and $M_2$ are two distinct maximal ideals of $R$, then $R/(M_1M_2)=R/(M_1\cap M_2)$ is a fully almost prime ring.
\end{corollary}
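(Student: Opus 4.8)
The plan is to reduce the statement to the preceding Chinese Remainder Theorem together with Corollary \ref{corollarynew}. First I would dispose of the asserted equality of quotients: since $M_1$ and $M_2$ are two distinct maximal ideals of the right S-unital fully almost prime ring $R$, Theorem \ref{fully two sided} gives $M_1M_2=M_2M_1=M_1\cap M_2$. Consequently $R/(M_1M_2)=R/(M_1\cap M_2)$, so it suffices to prove that this single quotient ring is fully almost prime.

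Next I would invoke the Chinese Remainder Theorem stated just above. Distinct maximal ideals are comaximal, so $M_1+M_2=R$, and Theorem \ref{fully two sided} supplies $M_1M_2=M_2M_1$; thus both hypotheses of that theorem are met for the pair $M_1,M_2$. It yields the ring isomorphism
\[
R/(M_1M_2)\cong (R/M_1)\times(R/M_2).
\]
Hence the problem becomes showing that the direct product $(R/M_1)\times(R/M_2)$ is fully almost prime.

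Finally I would apply Corollary \ref{corollarynew}, which by Remark \ref{newest} is valid for two sided ideals: the product $(R/M_1)\times(R/M_2)$ is fully almost prime precisely when each factor $R/M_i$ is fully idempotent. To verify this, note that each $R/M_i$ is a homomorphic image of the right S-unital ring $R$ and is therefore itself right S-unital (if $a=ar$ in $R$, then $\bar a=\bar a\,\bar r$ in $R/M_i$). Moreover, since $M_i$ is a maximal ideal, the ideal correspondence shows that $R/M_i$ is simple, so its only proper ideal is $0$; as $0^2=0$, the ring $R/M_i$ is vacuously fully idempotent. Both factors thus satisfy the hypothesis of Corollary \ref{corollarynew}, and the conclusion follows.

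The content here is light, since every step is a direct application of an already established result. The only points that require care are the verifications that make those results applicable: confirming that $M_1M_2=M_2M_1$ and $M_1+M_2=R$ so that the Chinese Remainder Theorem applies, observing that quotients of right S-unital rings remain right S-unital, and recognizing that a simple ring is vacuously fully idempotent because $0$ is its unique proper ideal. I do not expect any genuine obstacle beyond this routine bookkeeping, together with the attention to the \emph{right} versus \emph{two sided} distinction that Remark \ref{newest} is designed to handle.
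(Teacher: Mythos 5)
Your proof is correct, and its overall skeleton matches the paper's: both reduce the problem via the Chinese Remainder Theorem (using Theorem \ref{fully two sided} to get $M_1M_2=M_2M_1=M_1\cap M_2$ and comaximality of distinct maximal ideals) to showing that $(R/M_1)\times(R/M_2)$ is fully almost prime. The difference is in how that last fact is established. The paper quotes Corollary \ref{c411}, which handles it in one stroke: $(R/M_1)\times(R/M_2)$ is a homomorphic image of $R$ under $r\mapsto(r+M_1,r+M_2)$, and homomorphic images of fully almost prime rings are fully almost prime. You instead go through Corollary \ref{corollarynew} (in its two-sided form via Remark \ref{newest}), which forces you to check two extra hypotheses: that each $R/M_i$ is right S-unital (true, as quotients of right S-unital rings) and fully idempotent (true, since maximality of $M_i$ means the only proper ideal of $R/M_i$ is $0$). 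Your route is longer but buys a sharper conclusion: by Theorem \ref{3} the quotient $R/(M_1M_2)$ is in fact fully idempotent, not merely fully almost prime. One caution: your simplicity argument is valid only for two-sided ideals (a simple ring can have non-idempotent proper \emph{right} ideals), so your proof genuinely depends on the two-sided reading supplied by Remark \ref{newest}; the paper's route via Corollary \ref{c411} does not have this sensitivity.
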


\begin{proof}
Firstly, $M_1$ and $M_2$ are comaximal ideals and thus by Corollary \ref{c411},
we obtain that $(R/M_1)\times(R/M_2)$ is a fully almost prime ring.
Now, by Theorem \ref{fully two sided} we have $M_1M_2=M_2M_1$. Thus by Chinese remainder theorem we are done.  
\end{proof}

Theorem 1.3 of \cite{Bl} states that the center $C(R)$ of a fully prime (noncommutative) ring $R$ is either a field or the zero ring.
This holds also for fully prime right rings.
However, the ring of polynomials over a fully prime right ring is not a fully prime right ring, since the polynomial $x\in C(R[x])$ that is not a unit.

In the following theorem, we show that if the direct product $R=R_1\times R_2$ of right S-unital rings $R_1$ and $R_2$ is a fully almost prime right ring
such that $0$ is a prime right ideal, then the ring $R[x]$ of polynomials fails to be a fully almost prime right ring.
The proof is very similar to the proof of Theorem 1.3 of \cite{Bl}.

\begin{theorem}
Let $R=R_1\times R_2$ be a fully almost prime right ring, where $R_1$ and $R_2$ are right \text{\rm S}-unital rings.
If $0$ is a prime right ideal of $R$ and $C(R)\neq0$, then $R$ is with unity and $C(R)$ is a field. 
\end{theorem}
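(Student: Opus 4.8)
The plan is to strip the hypotheses down to two structural facts and then manufacture a unity out of a single nonzero central element. First I would record that $R=R_1\times R_2$ is right S-unital, since a direct product of right S-unital rings is clearly right S-unital, and that $R$ is fully right idempotent by Theorem \ref{3}, because $R$ is a fully almost prime right ring that is a product of right S-unital rings. Consequently every right ideal of $R$ is idempotent, and $a\in aR$ for every $a\in R$. Now fix $0\neq c\in C(R)$ and consider $\operatorname{ann}(c)=\{x\in R:cx=0\}$; since $c$ is central this is a two-sided ideal and $cx=0\iff xc=0$. For $y\in\operatorname{ann}(c)$ and $a\in R$ one computes $y(ca)=(yc)a=0$, so $\operatorname{ann}(c)\cdot cR=0$. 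Because $0$ is a prime right ideal and $cR\neq0$ (as $c\in cR$ by right S-unitality), primeness forces $\operatorname{ann}(c)=0$; that is, $c$ is neither a left nor a right zero divisor.

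Next I would exploit idempotence. Since $cR=(cR)^2$ we have $c\in cRcR$, and centrality lets every generator $c a_i c b_i$ collapse to $c^2 a_i b_i$; summing yields $c=c^2 s$ for some $s\in R$. Put $u=cs$, so that $cu=c$. For arbitrary $a\in R$ we get $c(ua)=(cu)a=ca$, whence $c(ua-a)=0$ and therefore $ua=a$ because $\operatorname{ann}(c)=0$. Symmetrically $c(au)=(ca)u=(ac)u=a(cu)=ac=ca$ gives $c(au-a)=0$ and hence $au=a$. Thus $u$ is a two-sided identity, so $R$ is a ring with unity $1=u=cs$.

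Finally, with $1=cs$ in hand, $c$ has right inverse $s$; from $c(sc)=(cs)c=c=c\cdot 1$ together with the triviality of the annihilator of $c$ we deduce $sc=1$, so $c$ is a unit. The inverse of a central unit in a ring with unity is again central, so $s\in C(R)$ and $c$ is a unit of $C(R)$. Since $C(R)$ is a commutative ring containing $1\neq 0$ (it is nonzero by hypothesis) in which every nonzero element is invertible, $C(R)$ is a field, completing the proof.

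I expect the principal obstacle to be the absence of a unit a priori: the crux is the cancellation step that promotes the mere relation $cu=c$ to a genuine two-sided identity, and this step works only after one has established, via primeness of $0$, that $c$ has trivial annihilator. The remaining manipulations are routine bookkeeping with centrality, right S-unitality, and idempotence.
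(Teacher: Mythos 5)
Your proof is correct and follows essentially the same route as the paper's: invoke Theorem \ref{3} to get full right idempotence, extract $c=c^2s$ from idempotence of the right ideal $cR$, and use primeness of $0$ to promote $u=cs$ to a two-sided identity and $c$ to a unit of $C(R)$. The only difference is organizational: the paper applies the element-wise primeness condition ($aRb=0$ implies $a=0$ or $b=0$) three separate times (to make $e$ central, to get $r=re$, and to make $b$ central), whereas you consolidate primeness into the single statement $\mathrm{ann}(c)=0$ and then cancel $c$ repeatedly --- a mild streamlining of the same argument, not a different one.
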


\begin{proof} By Theorem \ref{3}, $R$ is fully right idempotent ring.
 Let $0\neq a\in C(R)$.
Then,
\[
a^2R=aRa\subseteq aR=(aR)^2=aR(aR)=a^2RR\subseteq a^2R. 
\]
Thus, $a^2R=aR$. Hence $a\in a^2R$, which means that $a=a^2b$ for some $b\in R$.
Let $ab=e$ for some $e\in R$. Then, $ae=a(ab)=a^2b=a$, and
\[
a(er-re)=a(er)-a(re)=(ae)r-r(ae)=ar-ra=0,
\]
for all $r\in R$. Hence $aR(er-re)=Ra(er-re)=0$. Therefore, $er=re$ since 0 is a prime right ideal. This yields that $e\in C(R)$.

Similarly, $a(r-re)=0$ and this implies that $aR(r-re)=Ra(r-re)=0$. Thus, $r=re=er$ for all $r\in R$.
Hence, $e=ab=1\in C(R)$. 

Moreover, $a(br-rb)=abr-arb=abr-rab=0$. Thus $aR(br-rb)=Ra(br-rb)=0$, which implies that $br=rb$ for all $r\in R$. Finally, $b\in C(R)$ and hence $a$ is invertible in $C(R)$.
This implies that $C(R)$ is a field. 
\end{proof}

\begin{remark}
Consider $R=R_1\times R_2$ as a fully almost prime right ring, where $R_1$, $R_2$ are right $S$-unital rings. Then $R_1$, $R_2$ are both  fully almost prime right ring, by Corollary \ref{corollarynew}. Consider the ring $R_1 [x]\times R_2 [x]$, denoted as $R[x]$, which is formed as a direct product of two right $S$-unital polynomials rings. In the case that $R[x]$ is a fully almost prime right ring with 0 being a prime right ideal of $R[x]$, it follows that $C(R[x])$ is a field, however, $(x,x)\in C(R)$, contradiction, since $(x,x)$ has no inverse. Hence, $R[x]$ does not meet the criteria to be classified as a fully almost prime right ring
\end{remark}
\section{Almost right $S$-prime ideals}

We first menition the definition of right $S$-prime ideal.

\begin{definition}[Definition 2.2 of \protect\cite{A}]
Let $P$ be an ideal of a ring $R$, and $S$ be an $m$-system of $R$ such that 
$P\cap S=\phi$. We call $P$ a right $S$-prime ideal associated with an
element $s\in S$ (briefly a right $S$-prime ideal), if for some ideals $A$, $%
B$ of $R$ with $AB \subseteq P$, either $A\langle s\rangle \subseteq P$ or $%
B\langle s\rangle\subseteq P$.
\end{definition}

\begin{definition}
Let $R$ be a ring and $S$ be an $m$-system of $R$. An ideal $P$ of $R$ with $%
P\cap S=\phi $ is called a weakly right $S$-prime ideal associated with an
element $s\in S$ (or briefly a weakly right $S$-prime ideal) if whenever $A$
and $B$ are ideals of $R$ with $0\neq AB\subseteq P$, implies either $%
A\langle s\rangle \subseteq P$ or $B\langle s\rangle \subseteq P$.
\end{definition}

\begin{definition}
Let $R$ be a ring and $S$ be an $m$-system of $R$. An ideal $P$ of $R$ with $%
P\cap S=\phi $ is called an almost right $S$-prime ideal associated with an
element $s\in S$ (or briefly an almost right $S$-prime ideal) if whenever $A$
and $B$ are ideals of $R$ with $AB\subseteq P$ and $AB\nsubseteq P^{2}$,
implies either $A\langle s\rangle \subseteq P$ or $B\langle s\rangle
\subseteq P$.
\end{definition}

\begin{remark}
\label{11} Let $S$ denote an $m$-system of a ring $R$, and suppose $P$ is an almost right $S$-prime ideal of $R$. Let $\mathfrak{J}$ represent the collection of ideals that do not intersect with $S$, then $P$ belongs to $\mathfrak{J}$ and $\mathfrak{J}$ is not empty. By applying Zorn's lemma, we infer that $\mathfrak{J}$ possesses a maximal element $J$. Consequently, by employing Proposition 10.5 from \cite{L}, we obtain that $J$ is prime. Therefore, an almost right $S$-prime ideal $P$ of a ring $R$, which is maximal concerning the condition that $P$ does not intersect with $S$, is indeed a prime ideal, also known as an almost prime ideal. Additionally, if $R$ possesses an identity element and $S$ is a subset of $U(R)$ (the set of units in $R$), then the concepts of almost prime ideals and almost right $S$-prime ideals coincide. Furthermore, if $S_{1}$ and $S_{2}$ are two $m$-systems of $R$ with $S_{1}$ being a subset of $S_{2}$, then every almost right $S_{1}$-prime ideal (disjoint from $S_{2}$) also qualifies as an almost right $S_{2}$-prime ideal.
\end{remark}

\begin{example}
$(1)$ From the definition, it is clear that every (almost) prime ideal that does not intersect with $S$ is also an almost right $S$-prime ideal. Additionally, every right $S$-prime ideal (as defined in \cite{A}) is also an almost right $S$-prime ideal. However, the zero ideal presents an example of a weakly right $S$-prime ideal, and consequently an almost right $S$-prime ideal, which might not always be a right $S$-prime ideal in all cases.

$(2)$ Let $R$ be a local ring with the unique maximal ideal $M$. Then, $R{%
\setminus }M$ is an $m$-system that is disjoint from each ideal of $R$.
Thus, $M$ is an almost right $(R{\setminus }M)$-prime ideal. In addition, by
Remark \ref{11}, the almost right $(R{\setminus }M)$-prime ideals are
precisely the almost prime ideals of $R$.

$(3)$ Let $R=M_{2}(\mathbb{Z}_{12})$. The product of the non zero ideals $%
A=M_{2}(\langle 2\rangle )$ and $B=M_{2}(\langle 6\rangle )$ equals $0$,
hence $0$ is not a prime ideal. In addition, let 
\begin{equation*}
S=\Bigg\{s=\left( 
\begin{array}{cc}
r & 0 \\ 
0 & r%
\end{array}%
\right) ;r\in \{3,9\}\Bigg\},
\end{equation*}%
then $S$ is an $m$-system of the ring $R$. The zero ideal, of $R$, is not
right $S$-prime becuase $A\langle s\rangle \neq 0$ and $B\langle s\rangle
\neq 0$ for all $s\in S$. However, clearly $0$ is an almost right $S$-prime
ideal.

$(4)$ Let $R=M_{2}(\mathbb{Z}[x])$, and $P=M_{2}(\langle 9x\rangle )$.
Example 2.12 of \cite{A}, showed that the ideal $P$ is a right $S$-prime
ideal (i.e., an almost right $S$-prime ideal), but not prime, considering 
\begin{equation*}
S=\Bigg\{s^{2^{n}},\text{ where }s=\left( 
\begin{array}{cc}
3 & 0 \\ 
0 & 3%
\end{array}%
\right) ,n\in \mathbb{Z}^{+}\Bigg\}.
\end{equation*}%
In addition, $P$ is not an almost prime ideal, because for the ideals $%
A=M_{2}(\langle 9\rangle )$ and $B=M_{2}(\langle x\rangle )$, we have $%
AB\subseteq P$ and $AB\not\subseteq P^{2}$, however, $A\not\subseteq P$ and $%
B\not\subseteq P$.
\end{example}

\begin{theorem}
\label{BE} Let $S\subseteq R$ be an $m$-system of a ring $R$ with identity,
and $P$ be an ideal of $R$ such that $P\cap S=\phi$, then the following
statements are equivalent:

$(1)$ $P$ is a almost right $S$-prime ideal.

$(2)$ For $a,b\in R$ with $aRb\subseteq P$ and $aRb\nsubseteq P^{2}$, we
have $a\langle s\rangle \subseteq P$ or $b\langle s\rangle \subseteq P$.
\end{theorem}

\begin{proof}
$(1)\Rightarrow (2)$: Let $aRb\subseteq P$ and $aRb\nsubseteq P^{2}$ for
some $a,b\in R$. Then we have $\langle a\rangle \langle b\rangle
=RaRRbR\subseteq P$ and $\langle a\rangle \langle b\rangle \nsubseteq P^{2}$
since $aRb\nsubseteq P^{2}$. By assumption $(1)$, either $a\langle s\rangle
\subseteq \langle a\rangle \langle s\rangle \subseteq P$ or $b\langle
s\rangle \subseteq \langle b\rangle \langle s\rangle \subseteq P$, for some $%
s\in S$.

$(2)\Rightarrow (1)$: Consider ideals $A$ and $B$ of $R$ with $AB\subseteq P$
and $AB\nsubseteq P^{2}$. Suppose that $A\langle s\rangle \not\subseteq P$
and $B\langle s\rangle \not\subseteq P$ for all $s\in S$. In this case, we
can choose $a\in A-P$ and $b\in B-P$. For some $a_{1}\in P\cap A$ and $%
b_{1}\in P\cap B$, we have $(a+a_{1})R(b+b_{1})\subseteq P$, and since $%
(a+a_{1})\langle s\rangle \not\subseteq P$ and $(b+b_{1})\langle s\rangle
\not\subseteq P$ for any $s\in S$, it implies $(a+a_{1})R(b+b_{1})\subseteq
P^{2}$. By considering all possible cases where $a_{1}$ and or $b_{1}$ are
elements of $P^{2},$ $ab_{1}\in P^{2},$ $a_{1}b\in P^{2},$ $a_{1}b_{1}\in
P^{2}$. Hence $ab\in P^{2}.$ Thus, $AB\subseteq P^{2}$, which leads to a
contradiction.
\end{proof}

 Let $S$ be an $m$-system of a ring $R$.
In referring to \cite{A}, Definition 3.1 states that   an  ideal  $I$ of $R$ is called an $S$-finite  ideal   if there is a finitely generated  ideal $J$ such that $I\langle s\rangle\subseteq J\subseteq I$ for some $s\in S$. On the other hand, every idempotent ideal is clearly  almost prime, and if it is disjoint from $S$, then, it is an almost right $S$-prime ideal. 

\begin{corollary} Let $S$ be an $m$-system of a ring $R$, and let $A, B$ be non $S$-finite ideals of $R$ such that $A\cap B=0$. If $AB$ is an almost right $S$-prime ideal, then, it is an idempotent ideal.  
\end{corollary}
\begin{proof} If $A\langle s\rangle\subseteq AB$ for some $s\in S$, then,  $A\langle s\rangle\subseteq A\cap B\subseteq A$, contradiction since $A$ is not $S$-finite. Hence,  $A\langle s\rangle\not\subseteq AB$ for all $s\in S$. Similarly, one can see $B\langle s\rangle\not\subseteq AB$ for all $s\in S$. Since $AB=AB$, and $AB$ is an almost right $S$-prime, then, $AB\subseteq (AB)^2 \subseteq AB$. Thus, $AB$ is idempotent. 
\end{proof}

\begin{proposition}\label{finite} Let $S$ be an $m$-system of a local ring $(R, M)$, and let $P$ be  an  ideal of $R$, disjoint from $S$, such that $P^2=M^2$. Then, $P$ is an almost right $S$-prime ideal. 
\end{proposition}

\begin{proof}
The proof is consequences of Proposition \ref{local2}.
\end{proof}

\begin{proposition}
Let $S$ be an $m$-system of a local ring $(R, M)$, and let $P$ be a finitely generated   ideal of $R$, disjoint from $S$, such that $M^2\subseteq P$ and $M^2\neq P^2$. If $P$ is an almost right $S$-prime  ideal, then, $M$ is $S$-finite. 
\end{proposition}

\begin{proof}
Suppose that $P$ is an almost right $S$-prime  ideal of $R$.  Clearly, $P^2\subseteq M^2$. 
If $M^2\subseteq P^2$, then,  $M^2 =P^2$, contradiction. Thus,   $M^2\not\subseteq P^2$, and $M^2\subseteq P$, hence by assumption,  $M \langle s\rangle\subseteq P\subseteq M$ for some $s\in S$. 
\end{proof}

\begin{theorem}
\label{ID} Let $S\subseteq R$ be an $m$-system of a ring $R$ with identity,
and $P$ be an ideal of $R$ such that $P\cap S=\phi$, then the following
statements are equivalent:

$(1)$ $P$ is an almost right $S$-prime ideal associated with $s\in S$.

$(2)$ For all $r\in R-(P:\langle s\rangle )$ either $(P:\langle r\rangle
)\subseteq (P:\langle s\rangle )$ or $(P:\langle r\rangle )=(P^{2}:\langle
r\rangle )$.
\end{theorem}

\begin{proof}
$(1)\Rightarrow(2)$: Let $y \in (P:\langle r\rangle)$, then $y\langle
r\rangle\subseteq P$. Here we discuss two cases.

If $y\langle r\rangle \nsubseteq P^{2}$, then $\langle y\rangle \langle
r\rangle \subseteq P$ and $\left\langle y\right\rangle \langle r\rangle
\nsubseteq P^{2}$, and hence by $(1)$ either $\langle y\rangle \langle
s\rangle \subseteq P$ or $\langle r\rangle \langle s\rangle \subseteq P$.
Consequently, either $r\in \langle r\rangle \subseteq (P:\langle s\rangle )$%
, contadiction, or $y\in \langle y\rangle \subseteq (P:\langle s\rangle )$.

If $y\langle r\rangle \subseteq P^{2}$, then $y\in (P^{2}:\langle r\rangle )$%
.

Thus, $(P:\langle r\rangle )\subseteq (P:\langle s\rangle )\cup
(P^{2}:\langle r\rangle )$. Hence, either $(P:\langle r\rangle )\subseteq
(P:\langle s\rangle )$ or $(P:\langle r\rangle )=(P^{2}:\langle r\rangle )$,
(recall that $(P^{2}:\langle r\rangle )\subseteq (P:\langle r\rangle )$).

$(2)\Rightarrow (1)$: Suppose that $aRb\subseteq P$ and $aRb\nsubseteq P^{2}$
for some $a,b\in R$. If $b\langle s\rangle \not\subseteq P$, then $b\not\in
(P:\langle s\rangle )$, hence, by $(2)$, either $(P:\langle b\rangle
)\subseteq (P:\langle s\rangle )$ or $(P:\langle b\rangle )=(P^{2}:\langle
b\rangle )$. On the other hand, we have $a\in (P:\langle b\rangle )$, thus,
either $a\in (P^{2}:\langle b\rangle )$, hence $aRb\subseteq a\langle
b\rangle \subseteq P^{2}$, contradiction. Or $a\in (P:\langle s\rangle )$,
hence, $a\langle s\rangle \subseteq P$. Consequently, $P$ is an almost right 
$S$-prime ideal.
\end{proof}

\begin{corollary}
\label{NICO}[Theorem 1.14 of \cite{AbouFindik} For an ideal $P$ of a ring $R$
with identity, the following statements are equivalent:

$(1)$ $P$ is almost prime.

$(2)$ For $r\in R-P$, $(P:\left\langle r\right\rangle )=P\cup
(P^{2}:\left\langle r\right\rangle )$.

$(3)$ For $r\in R-P$, $(P:\left\langle r\right\rangle )=P$ or $%
(P:\left\langle r\right\rangle )=(P^{2}:\left\langle r\right\rangle )$.
\end{corollary}

\begin{proof}
Follows from Theorem \ref{ID} by taking $S=\{1\}$. Recall that $(P:R)=P$ and 
$P\subseteq (P:\langle r\rangle )$ for all $r\in R$.
\end{proof}

Let $S\subseteq R$ be an $m$-system of a ring $R$ with identity, and $P$ be
an ideal of $R$ such that $P\cap S=\phi $, then an almost right $S-$prime
ideal does not have to be a right $S-$prime ideal or a weakly right $S$%
-prime ideal. In the next two theorems, we examine some cases such that the
statements above are equivalent.

\begin{theorem}
Let $S$ be an $m$-system of a unitary ring $R$, and $P$ be
an ideal of $R$ such that $P\cap S=\phi $ and such that $(P^{2}:P)\subseteq
P $. Then the following statements are equivalent.
\end{theorem}

(1) $P$ is an almost right $S$-prime ideal.

(2) $P$ is a right $S$-prime ideal.

\begin{proof}

$(1)\Rightarrow (2)$ Suppose the almost right $S$-prime ideal $P$ is not a right $S$-prime ideal. This implies the existence of some ideals $A$ and $B$ of $R$ such that $AB\subseteq P$ with $A\left\langle s\right\rangle \nsubseteq P$ and $B\left\langle s\right\rangle \nsubseteq P$ for all $s\in S$. Consequently, from  $(1)$, we deduce that $AB\subseteq P^{2}$. Now, let $a\in A\backslash P$ and $b\in B\backslash P$ such that $a\left\langle s\right\rangle \nsubseteq P$ and $b\left\langle s\right\rangle \nsubseteq P$ for all $s\in S$. Then, we have
\[
(\left\langle
a\right\rangle +P)\left\langle b\right\rangle =\left\langle a\right\rangle
\left\langle b\right\rangle +P\left\langle b\right\rangle \subseteq
AB+P\subseteq P.
\]
If $(\left\langle a\right\rangle +P)\left\langle
b\right\rangle \subseteq P^{2}$, then $P\left\langle b\right\rangle
\subseteq P^{2}$. This implies that $b\in (P^{2}:P)\subseteq P$. This
contradicts with $b\in B\backslash P$. If $(\left\langle a\right\rangle
+P)\left\langle b\right\rangle \nsubseteq P^{2}$, then by (2) we get that
either $(\left\langle a\right\rangle +P)\left\langle s\right\rangle
\subseteq P$ or $\left\langle b\right\rangle \left\langle s\right\rangle $ $%
\subseteq P$, which implies $a\left\langle s\right\rangle \subseteq P$ or $%
a\left\langle s\right\rangle \subseteq P$ respectively. Contradiction.

$(2)\Rightarrow (1)$ Clear by definition.
\end{proof}

\begin{theorem}
Let $S\subseteq R$ be an $m$-system of a ring $R$, and $P$ be
an ideal of $R$ such that $P\cap S=\phi $, and such that $P^{2}=0.$ Then, the
following statements are equivalent.
\end{theorem}

(1) $P$ is a weakly right $S$-prime ideal.

(2) $P$ is an almost right $S$-prime ideal.

\begin{proof}
$(1)\Rightarrow (2)$ Follows from definition.

$(2)\Rightarrow (1)$ Suppose that $A$ and $B$ are ideals of $R$ such that $%
0\neq AB\subseteq P$. Then $AB\nsubseteq P^{2}=0$. Thus, by (2) we are done.
\end{proof}

The next result is a consequence of the Theorem above.

\begin{corollary}
Let $S\subseteq R$ be an $m$-system of a ring $R$, in which $R^{2}=0$, and  $P$ be
an ideal of $R$ such that $P\cap S=\phi $. Then, the
following statements are equivalent.
\end{corollary}

(1) $P$ is a weakly right $S$-prime ideal.

(2) $P$ is an almost right $S$-prime ideal.

\begin{lemma}
\label{AA} Let $S$ be an $m$-system of a ring $R$. Then we have the
following.

$(1)$ For some rings epimorphism $f$: $R\to f(R)$, f(S) is an $m$-system of $%
f(R)$.

$(2)$ The set $\bar{S}=\{\bar{s}=s+I:s\in S\}$, is an $m$-system of $R/I$
for any ideal $I$ of $R$.
\end{lemma}

The following theorem, is the $S$-version of Theorem 2.11 of \cite{AbouFindik}. 
\begin{theorem}
Let $S\subseteq R$ be an $m$-system of a ring $R$ with identity, and $P$ be
an ideal of $R$ such that $P\cap S=\phi $, then the following statements are
equivalent
\end{theorem}

(1) $P$ is an almost right $S$-prime right ideal of $R$.

(2) $P/P^{2}$ is a weakly right $S$-prime ideal of $R/P^{2}$.

\begin{proof}
$(1)\Rightarrow (2)$ Suppose that $\overline{0}\neq \left( a+P^{2}\right)
R/P^{2}\left( b+P^{2}\right) \subseteq P/P^{2}$. Thus, $aRb\subseteq P$ and $aRb\nsubseteq P^{2}$. Now by (1) we have that either $%
a\left\langle s\right\rangle \subseteq P$ or $b\left\langle s\right\rangle
\subseteq P$ for some $s\in S$. Hence, $\left( a+P^{2}\right) [(\langle
s\rangle +P^{2})/P^2]\subseteq P/P^{2}$ or $\left( b+P^{2}\right) [(\langle
s\rangle +P^{2})/P^2]\subseteq P/P^{2}.$ Hence $P/P^{2}$ is a weakly right $S$%
-prime ideal of $R/P^{2}$.

$(2)\Rightarrow (1)$ Suppose that $P/P^{2}$ is a weakly right $S$-prime ideal in $%
R/P^{2}$ and let $aRb\subseteq P$ and $aRb\nsubseteq P^{2},$ then $0$ $\neq $
$(a+P^{2})R/P^{2}(b+P^{2})\subseteq P/P^{2}$. But $P/P^{2}$ is a weakly
right $S$-prime ideal in $R/P^{2},$ then,
\begin{equation*}
 \text{either }
 a\left\langle s\right\rangle
+aP^{2}=(a+P^{2})[(\langle s\rangle +P^{2})/P^2]\subseteq P/P^{2} 
\end{equation*}

\begin{equation*}
\text{ or } b\left\langle
s\right\rangle +bP^{2}=(b+P^{2})[(\langle s\rangle +P^{2})/P^2]\subseteq P/P^{2}. 
\end{equation*}
Thus, $%
a\left\langle s\right\rangle \subseteq $ $P$ or $b\left\langle
s\right\rangle \subseteq $ $P$ which means $P$ is an almost right $S$-prime
right ideal of $R.$
\end{proof}

\begin{theorem}
\label{fully1 copy(1)} Let $f:R_{1}\rightarrow R_{2}$ be a ring epimorphism.
Consider $S\subseteq R_{1}$ as an $m$-system, and let $P$ be an ideal of $%
R_{1}$ such that $\text{ker}(f)\subseteq P$. If $P$ is an almost right $S$%
-prime ideal of $R_{1}$ associated with $s\in S$, then $f(P)$ is an almost
right $f(S)$-prime ideal of $R_{2}$ associated with $f(s)$.
\end{theorem}

\begin{proof}
Assume that $P$ is an almost right $S$-prime ideal, and consider $%
B_{1}B_{2}\subseteq f(P)$ and $B_{1}B_{2}\nsubseteq f(P)^{2}$ for some
ideals $B_{1}$ and $B_{2}$ of $R_{2}$. Then, there exist ideals $A_{1}$ and $%
A_{2}$ of $R_{1}$ such that $A_{1}=f^{-1}(B_{1})$ and $A_{2}=f^{-1}(B_{2})$.
Since $f$ is an epimorphism, it follows that $f(A_{1})=B_{1}$ and $%
f(A_{2})=B_{2}$, implying that $f(A_{1}A_{2})=B_{1}B_{2}\subseteq f(P)$ and $%
f(A_{1}A_{2})=B_{1}B_{2}\nsubseteq f(P)^{2}$. Consequintly $%
A_{1}A_{2}\subseteq f^{-1}(f(A_{1}A_{2}))\subseteq f^{-1}(f(P))=P$ and $%
A_{1}A_{2}\nsubseteq P^{2}.$ Then, we have that either $A_{1}\langle s\rangle
\subseteq P$ or $A_{2}\langle s\rangle \subseteq P$ for some $s\in S$,
leading to either $B_{1}\langle f(s)\rangle \subseteq P$ or $B_{2}\langle
f(s)\rangle \subseteq P$. According to $(1)$ of Lemma \ref{AA}, $f(S)$ forms
an $m$-system of $R_{2}$. Suppose $f(S)\cap f(P)\neq \phi $, and let $y\in
f(S)\cap f(P)$. This implies the existence of $s_{1}\in S$ and $x\in P$ such
that $y=f(s_{1})=f(x)$. Consequently, $x-s_{1}\in \text{\textrm{ker}}%
f\subseteq P$, leading to $s_{1}\in P$. However, this contradicts the fact
that $P\cap S=\phi $. Therefore, $f(S)$ is disjoint from $f(P)$, and
consequently, $f(P)$ is an almost right $f(S)$-prime ideal of $R_{2}$.
\end{proof}

\begin{theorem}
\label{fully1 copy(2)} Let $f:R_{1}\rightarrow R_{2}$ be a ring
monomorphism. Consider $S\subseteq R_{1}$ as an $m$-system, and let $P$ be
an ideal of $R_{1}$. If $f(P)$ is an almost right $f(S)$-prime ideal of $%
R_{2}$ associated with $f(s)\in f(S)$, then $P$ is an almost right $S$-prime
ideal of $R_{1}$ associated with $s$.
\end{theorem}

\begin{proof}
Assume that $f(P)$ is an almost right $f(S)$-prime ideal of $R_{2}$, and let 
$A_{1}$, $A_{2}$ be ideals of $R_{1}$ with $A_{1}A_{2}\subseteq P$ and $%
A_{1}A_{2}\nsubseteq P^{2}$. Then, $f(A_{1})f(A_{2})=f(A_{1}A_{2})\subseteq
f(P)$. If $f(A_{1}A_{2})\subseteq f(P^{2})$, then $A_{1}A_{2}\subseteq
f^{-1}(f(A_{1}A_{2}))\subseteq f^{-1}(f(P^{2}))=P^{2}$, which is a
contradiction. Hence, $f(A_{1})f(A_{2})=f(A_{1}A_{2})\nsubseteq
f(P^{2})=f(P)^{2}$ . Since $f(P)$ an almost right $S$-prime ideal of $R_{1}$

thus either $f(A_{1})\langle m\rangle \subseteq f(P)$ or $f(A_{2})\langle
m\rangle \subseteq f(P)$ for some $m\in f(S)$. Hence either $A_{1}\langle
s\rangle \subseteq f^{-1}(f(A_{1}\langle s\rangle ))\subseteq f^{-1}(f(P))=P$
or $A_{2}\langle s\rangle \subseteq f^{-1}(f(A_{2}\langle s\rangle
))\subseteq f^{-1}(f(P))=P$, where $m=f(s)$ for some $s\in S$. If $P\cap
S\neq \phi $, then for some $x\in P\cap S$ we have $f(x)\in f(P\cap
S)\subseteq f(P)\cap f(S)=\phi $, contradiction. Thus, $P\cap S=\phi $ and
hence, $P$ is an almost right $S$-prime ideal of $R_{1}$.
\end{proof}

\begin{theorem}
\label{fully2} Let $I$ be an ideal of a ring $R$, and $S\subseteq R$ be an $%
m $-system. If $P$ is an ideal of $R$ such that $I\subseteq P$. Then

$(1)$ If $P$ is an almost right $S$-prime ideal of $R$, then $P/I$ is an
almost right $\bar{S}$-prime ideal of $R/I$.

$(2)$ If $P/I$ is an almost right $\bar{S}$-prime ideal of $R/I$, then $P$ is an almost right $S$%
-prime ideal of $R$.
\end{theorem}

\begin{proof}
$(1)$ Let $P$ be an almost right $S$-prime ideal of $R$ and let $\bar{A}\bar{%
B}\subseteq \overline{P}=P/I$ and $\bar{A}\bar{B}\nsubseteq \overline{P}%
^{2}=(P/I)^{2}$ for some ideals $\bar{A},\bar{B}$ of $R/I$. Then there exist
ideals $A$ and $B$ of $R$ containing $I$ such that $\bar{A}=A/I$ and $\bar{B}%
=B/I$. So, $(AB+I)/I\subseteq P/I$, and $(AB+I)/I\nsubseteq (P^{2}+I)/I,$
thus $AB\subseteq P$ and $AB\nsubseteq P^{2}$ so by assumption, there exists $%
s\in S$ such that either $A\langle s\rangle \subseteq P$ or $B\langle
s\rangle \subseteq P$, and hence either $[(A+I)/I][(\langle s\rangle
+I)/I]\subseteq P/I$ or $[(B+I)/I][(\langle s\rangle +I)/I]\subseteq P/I$.
Thus either $A\langle s\rangle \subseteq P$ or $B\langle s\rangle \subseteq
P $. By $(2)$ of Lemma \ref{AA}, $\bar{S}$ is an $m$-system of $R/I$ and $%
P/I\cap \bar{S}=\phi $. Hence, $P/I$ is an almost right $\bar{S}$-prime ideal
of $R/I$.

$(2)$ Let $P/I$ be an almost right $\bar{S}$-prime ideal, and suppose that $%
AB\subseteq P$ and $AB\nsubseteq P^{2}$ for some ideals $A$, $B$ of $R$,
then $[(A+I)/I][(B+I)/I]\subseteq P/I$ and $[(A+I)/I][(B+I)/I]\nsubseteq
(P/I)^{2}$ $=\overline{P}^{2}.$ Thus, either $[(A+I)/I]\langle \bar{s}\rangle
\subseteq P/I$ or $[(B+I)/I]\langle \bar{s}\rangle \subseteq P/I$, for some $%
s+I=\bar{s}\in \bar{S}$, then either $[(A+I)/I][(\langle s\rangle
+I)/I]\subseteq P/I$ or $[(B+I)/I][(\langle s\rangle +I)/I]\subseteq P/I$.
Hence, either $A\langle s\rangle \subseteq P$ or $B\langle s\rangle \subseteq
P$. By notice that $S\cap P=\phi $ and $S$ is an $m$-system, we obtain that $%
P$ is an almost right $S$-prime ideal of $R$.
\end{proof}

Let $R_{_{^{i}}}$ be rings with identities and $S_{i}\subseteq $ $%
R_{_{^{i}}} $be an $m$-system of $R_{i}$ for each $i=1,2,...,n$, where $n\in 
\mathbb{N}$. Suppose that $R=R_{1}\times R_{2}\times \cdots \times R_{n}$ and $S=S_{1}\times
S_{2}\times \cdots \times S_{n}$. Then, note that $S$ is an $m$-system of $R$.
Also, each ideal of $R$ has the form $P=P_{1}\times P_{2}\times \cdots
\times P_{n}$, where $P_{i}$ is an ideal of $R_{i}$ for each $i=1,2,...,n.$

\begin{theorem}
\label{product} Suppose $R_{i}$ are rings with identies and $P_{i}$ is an
ideal of $R_{i}.$ Suppose that $S_{i}$ $\subseteq R_{i}$ is an m-system of $%
R_{i}$ for each $i=1,2$. Let $R=R_{1}\times R_{2}$ and $S=S_{1}\times S_{2}$%
. Then following statements are equivalent

(1) $P$ is an almost right $S_{1}$-prime ideal of $R_{1}$.

(2) $P\times R_{2}$ is an almost right $S$-prime ideal of $R_{1}\times R_{2}$%
.
\end{theorem}

\begin{proof}
$(i)\Rightarrow (ii)$ Let $(A_{1}\times A_{2})(B_{1}\times B_{2})\subseteq
P\times R_{2}$ and $(A_{1}\times A_{2})(B_{1}\times B_{2})\nsubseteq
(P\times R_{2})^{2}=P^{2}\times R_{2},$ where $A_{1}$,$B_{1}$ are ideals
of $R_{1}$, and $A_{2}$, $B_{2}$ are ideals of $R_{2.}$ Then $%
A_{1}B_{1}\times A_{2}B_{2}\subseteq P\times R_{2}$ and $A_{1}B_{1}\times
A_{2}B_{2}\nsubseteq P^{2}\times R_{2}.$ Thus $A_{1}B_{1}\subseteq P$ and $%
A_{1}B_{1}\nsubseteq P^{2}.$ Since $P$ is an almost right $S_{1}$-prime
ideal of $R_{1},$ $A_{1}\left\langle s\right\rangle \subseteq P$ or $%
B_{1}\left\langle s\right\rangle \subseteq P$ for some $s\in S_{1}.$ Hence $%
A_{1}\left\langle s\right\rangle \times A_{2}\subseteq P\times R_{2}$ and $%
B_{1}\left\langle s\right\rangle \times B_{2}\subseteq P\times R_{2}.$ Thus $%
(A_{1}\times A_{2})(\left\langle s\right\rangle \times R_{2})\subseteq
P\times R_{2}$ or $(B_{1}\times B_{2})(\left\langle s\right\rangle \times
R_{2})\subseteq P\times R_{2}$. Now we have for any $s_{2}\in S_{2}$ that $%
(A_{1}\times A_{2})\left\langle (s,s_{2})\right\rangle =(A_{1}\left\langle
s\right\rangle \times A_{2}\left\langle s_{2}\right\rangle )\subseteq
P\times R_{2}$ or $(B_{1}\times B_{2})\left\langle (s,s_{2})\right\rangle
=(B_{1}\left\langle s\right\rangle \times B_{2}\left\langle
s_{2}\right\rangle )\subseteq P\times R_{2}.$ Since $P\cap S_{1}=\emptyset $
we have $P\times R_{2}\cap S=\emptyset .$ Hence $P\times R_{2}$ is an almost
right $S$-prime ideal of $R_{1}\times R_{2}$.

$(ii)\Rightarrow (i)$ Let $A_{1}$, $B_{1}$ be ideals of $R_{1}$, and $A_{2}$, $B_{2}$ be ideals of $R_{2,}$ such $A_{1}B_{1}\subseteq P$ and $%
A_{1}B_{1}\nsubseteq P^{2}.$ Then $(A_{1}\times R_{2})(B_{1}\times
R_{2})\subseteq P\times R_{2}$ and $(A_{1}\times R_{2})(B_{1}\times
R_{2})\nsubseteq (P\times R_{2})^{2}=P^{2}\times R_{2}.$ Thus $(A_{1}\times
R_{2})\left\langle (s_{1},s_{2})\right\rangle \subseteq P\times R_{2}$ or $%
(B_{1}\times R_{2})\left\langle (s_{1},s_{2})\right\rangle \subseteq P\times
R_{2}$ for some $\left( s_{1},s_{2}\right) \in S_{1}\times S_{2}.$ Then $%
A_{1}\left\langle s_{1}\right\rangle \times R_{2}\left\langle
s_{2}\right\rangle \subseteq P\times R_{2}$ or $B_{1}\left\langle
s_{1}\right\rangle \times R_{2}\left\langle s_{2}\right\rangle \subseteq
P\times R_{2}.$Thus $A_{1}\left\langle s_{1}\right\rangle \subseteq P$ or $%
B_{1}\left\langle s_{1}\right\rangle \subseteq P.$ Hence $P$ is an almost
right $S_{1}$-prime ideal of $R_{1}$.
\end{proof}

\begin{remark}
Analogously of the proving of Theorem \ref{product}, the following equivalence holds: for any ideal $P$ of $R_{2}$, $P$ is an almost right $S$%
-prime ideal of $R_{2}$
if and only if $R_{1}\times P$ is an almost right $S$-prime ideal of $%
R_{1}\times R_{2}$.
\end{remark}
\begin{remark}
If $R$ is the direct product of $n$ rings $A_{1},A_{2},...,A_{n}$, and if $
S=\prod_{j=1}^{n}S_{j}$ is an $m$-system composed of the $m$-systems $%
S_{1},S_{2},...,S_{n}$.
\end{remark}

Then it follows that for some $k\in \{1,2,...,n\}$, if $I_{k}$ is an almost
right $S_{k}$-prime ideal
of $A_{k}$, then the ideal $A_{1}\times A_{2}\times ...\times I_{k}\times
...\times A_{n}$ is an almost right $S$-prime ideal of $R$. This result
establishes the connection between the almost right $S_{j}$-prime ideals in
the individual components $A_{j}$ and the almost right S-prime ideals in the
product ring $R$. It demonstrates how the properties of the component ideals
can be extended to construct a corresponding ideal in the direct product
ring.

\subsection{Idealization}

We now show how to construct almost right $S$-prime ideals using the Method of
Idealization. In what follows, $R$ is a ring (associative, not necessarily
commutative and not necessarily with identity) and $M$ is an $R-R$-bimodule.
The idealization of $M$ is the ring $R\boxplus M$ with $(R\boxplus
M,+)=(R,+)\oplus (M,+)$ and the multiplication is given by $%
(r,m)(s,n)=(rs,rn+ms).$ $R\boxplus M$ itself is, in a canonical way, an $R-R$%
-bimodule and $M\simeq 0\boxplus M$ is a nilpotent ideal of $R\boxplus M$ of
index 2. We also have $R\simeq R\boxplus 0$ and the latter is a subring of $%
R\boxplus M$. Note also that $R\boxplus M$ is a subring of the Morita ring $%
\left[ 
\begin{array}{cc}
R & M \\ 
0 & R%
\end{array}%
\right] $ via the mapping $(r,m)\mapsto \left[ 
\begin{array}{cc}
r & m \\ 
0 & r%
\end{array}%
\right] $. We will require some knowledge about the ideal structure of $%
R\boxplus M$. If $I$ is an ideal of $R$ and $N$ is an $R-R$- bi-submodule of 
$M$, then $I\boxplus N$ is an ideal of $R\boxplus M$ if and only if $%
IM+MI\subseteq N$. It follows from \cite{veldsman} that the prime ideals of $%
R\boxplus M$ are exactly the ideals of the form $I\boxplus M$ where$\ I$ is
a prime ideal of $R$.

It is clear that if $S$ is an m-system of $R$, then $S\boxplus
M=\{(s,k)|s\in S$ and $k\in M\}$ is a m-system of $R\boxplus M$.\bigskip
Note that $(S\boxplus M)\cap (I\boxplus M)=\emptyset $ if and only if $S\cap
I=\emptyset $.

\begin{theorem}\label{idealization}
Let $R$ be a ring with the proper ideal $I$ and $M$ an $R-R$-bi-module. If $%
S $ is an m-system of $R$, then $I\boxplus M$ is an almost right $S$ $%
\boxplus M$- prime ideal if and only if $I$ is an almost right $S$-prime
ideal of $R.$
\end{theorem}

\begin{proof}
$\Rightarrow $ Suppose $I\boxplus M$ is an almost right $S$ prime ideal of $%
R\boxplus M$. Let $aRb\subseteq I$ and $aRb\nsubseteq I^{2}$ where $a,b\in
R. $ Now $(a,0)R\boxplus M(b,0)=(aRb,aMb)\subseteq I\boxplus M$ and $%
(a,0)R\boxplus M(b,0)\nsubseteq I^{2}\boxplus M$. Hence $(a,0)R\boxplus
M(b,0)\nsubseteq \left( I\boxplus M\right) ^{2}.$ $I\boxplus M$ an almost
right $S$-prime ideal gives $(a,0)\left\langle (s,m)\right\rangle \subseteq
I\boxplus M$ or $(b,0)\left\langle \left( s,m\right) \right\rangle \subseteq
I\boxplus M$ for some $\left( s,m\right) \in $ $S$ $\boxplus M.$ Hence $%
(a,0)\left\langle (s,m)\right\rangle =(a,0)(R\boxplus M\left( s,m\right)
R\boxplus M=\left( RsR,RsM+RmR+MsR\right) \subseteq
(a,0)(RsR,M)=(a\left\langle s\right\rangle ,M)$ or $(a,0)\left\langle
s,m\right\rangle \subseteq (b\left\langle s\right\rangle ,M).$ Hence $%
a\left\langle s\right\rangle \subseteq I$ or $b\left\langle s\right\rangle
\subseteq I.$ So $I$ is almost right $S$-prime.

$\Leftarrow $Now suppose $I$ is an almost right $S$ prime ideal of $R.$ Let $%
(a,m_{1});(b,m_{2})\in R\boxplus M$ such that $(a,m_{1})R\boxplus
M(b,m_{2})=(aR,m_{1}R+aM)(b,m_{2})=(aRb,aRm_{2}+m_{1}Rb+aMb)\subseteq
I\boxplus M$ and $(a,m_{1})R\boxplus M(b,m_{2})\nsubseteq \left( I\boxplus
M\right) ^{2}.$ Hence $(a,m_{1})R\boxplus M(b,m_{2})\subseteq I\boxplus M$
and $(a,m_{1})R\boxplus M(b,m_{2})\nsubseteq I^{2}\boxplus M$. Thus $%
aRb\subseteq I$ and $aRb\nsubseteq I^{2}.$ Since $I$ is almost right $S$-prime, there exists $s\in S$ such that we have $a\left\langle s\right\rangle
\subseteq I$ or $b\left\langle s\right\rangle \subseteq I.$ Hence $%
(a,m_{1})\left\langle \left( s,m\right) \right\rangle =$ $(a,m_{1})R\boxplus
M(s,m)R\boxplus M)=(aR,aM+m_{1}R)(sR,sM+mR)\subseteq (aRsR,M)\subseteq
I\boxplus M$ or $(b,m_{2})(\left\langle s,m\right\rangle )\subseteq
(bRsR,M)=(b\left\langle s\right\rangle ,M)\subseteq I\boxplus M.$ Hence $%
I\boxplus M$ is almost right $S\boxplus M-$prime.
\end{proof}
The following theorem, is consequences of Theorem \ref{idealization}, when $S=\{1\}$.

\begin{theorem}
Let $R$ be a ring with the proper ideal $I$ and $M$ an $R-R$-bi-module. Then $I\boxplus M$ is an almost  prime ideal if and only if $I$ is an almost prime
ideal of $R.$
\end{theorem}

\end{document}